\documentclass[11pt,a4paper,oneside]{amsart}
\pdfoutput=1
\usepackage{amssymb}
\usepackage{esint}
\usepackage{mathtools}
\usepackage{fullpage}
\usepackage{hyperref}
\usepackage{enumitem}

\usepackage[english]{babel}
\usepackage[utf8]{inputenc} 
\usepackage[pdftex]{graphicx}
\usepackage{xcolor}

\usepackage{graphicx}
\usepackage{subcaption}
\usepackage{natbib}
\usepackage{multirow}

\theoremstyle{plain}
\newtheorem{theorem}{Theorem}[section]

\newtheorem{proposition}[theorem]{Proposition}

\theoremstyle{definition}

\theoremstyle{remark}

\renewcommand{\phi}{\varphi}

\title[Torus Computed Tomography for Experimental Data]{Torus Computed Tomography for Experimental Data}

\author{Ella Salo}
\address{Computational Engineering, School of Engineering Sciences,
Lappeenranta--Lahti University of Technology LUT, Lappeenranta, Finland}
\email{ella.salo@student.lut.fi}

\author{Alexander Meaney}
\address{Department of Mathematics and Statistics, University of Helsinki, Finland}
\email{alexander.meaney@helsinki.fi}

\author{Olli Koskela}
\address{HAMK Tech research unit, Häme University of Applied Sciences, Hämeenlinna, Finland}
\email{olli.koskela@hamk.fi}

\author{Jesse Railo}
\address{Computational Engineering, School of Engineering Sciences,
Lappeenranta--Lahti University of Technology LUT, Lappeenranta, Finland}
\email{jesse.railo@lut.fi}

\date{\today}

\begin{document}
\begin{abstract}
We implement the torus-based X-ray tomography method introduced by Ilmavirta, Koskela, and Railo in "Torus computed tomography", \emph{SIAM J. Appl. Math.}, 80(4):1947--1976, 2020, for experimental X-ray tomographic data. The numerical implementation is extended to accommodate fan-beam measurements by converting the data to a parallel-beam format and mapping the projection angles to the closed-geodesic directions on the torus. In addition, we consider two extensions of the original framework: the Star TCT method which extends the frequency coverage of the reconstruction, and a numerical implementation of torus backprojection developed by Railo in "Fourier analysis of periodic Radon transforms", \emph{J. Fourier Anal. Appl.}, 26(4):64, 2020, for which we also derive a corresponding regularized formulation. We demonstrate the methods on experimental X-ray data of a walnut and compare them with filtered backprojection. We also introduce a pointwise positivity constraint as a post-processing step, which substantially improves the reconstruction accuracy. The simulated data experiments are revisited using an updated implementation. The results indicate that the proposed extensions improve reconstruction quality and support the applicability of torus-based reconstruction methods to experimental data.
\end{abstract}

\keywords{X-ray computed tomography, Radon transform, regularization, Fourier analysis}
\subjclass[2010]{65R32, 44A12, 42B05, 45Q05}


\maketitle

\section{Introduction}
\label{sec:intro}

Computed tomography (CT) allows recovering cross-sectional images of objects from their projections, usually acquired using X-ray imaging. The standard method for tomographic reconstruction is filtered backprojection (FBP). The mathematical foundations date back to the work of Radon in 1917 \cite{Radon_1917__Berichte, Radon_1986__IEEETransMedImaging}. Notably, it took over half a century to realize the significance of the Radon transform and its inversion to tomographic reconstruction \cite{Cormack_1973__PhysMedBiol}. Subsequently, numerous different approaches have been developed for CT reconstruction, broadly categorizable into analytical (\emph{i.e.} direct) methods, iterative methods which tackle the reconstruction problem as a large linear system, and methods based on machine learning \cite{Clackdoyle_Defrise_2010__IEEESignalProcessMag,Kak_Slaney_1988, Ravishankar_et_al_2020__ProcIEEE}. In recent years, algorithm development has focused substantially on obtaining reconstructions from noisy and/or incomplete data, where FBP produces inadequate results \cite{Pan_et_al_2009__InverseProbl, Ravishankar_et_al_2020__ProcIEEE}.

The geodesic X-ray transform on the torus has been studied extensively in the theoretical setting. Injectivity and reconstruction results for the $d$-plane Radon transform on $\mathbb{T}^n$ were established by Ilmavirta in \cite{I15}, extending earlier results by Strichartz \cite{S82}, and Abouelaz and Rouvi\`er \cite{AR11}. Fourier-based reconstruction formulas, stability estimates, and Tikhonov regularization for periodic Radon transforms were further developed in \cite{R20}.

The Torus CT method, introduced by Ilmavirta et al. in \cite{IKR19}, is an alternative Fourier-based framework for tomographic reconstruction. Rather than reconstructing the image directly in the spatial domain, the method reconstructs the Fourier coefficients from X-ray measurements projected onto the flat torus. The resulting spectral representation of the target allows explicit control over recoverable frequency content and enables regularization directly in the Fourier domain. While the original work established the theoretical framework and provided numerical demonstrations, it remained unclear how effectively the method could be applied to practical X-ray data.


In this paper, we apply Torus CT to experimental fan-beam X-ray measurements of a walnut \cite{walnut}. Since the data are acquired at equally spaced projection angles rather than the closed geodesic directions on the torus required by the forward model $\mathcal{A}_{\mathbb{T}^2}$, we convert the fan-beam sinogram to parallel-beam Radon data and approximate the directions by nearest-angle selection. A few shortcomings of the numerical implementation of \cite{IKR19} were corrected and the implementation was extended to accommodate the experimental data. The reconstructions are compared with filtered backprojection using the same set of torus-optimal measurement directions.

In addition to the real-data experiments, we revisit the simulated-data experiments in \cite{IKR19} using the updated implementation and obtain subspantially improved results. We also introduce a pointwise positivity constraint as a simple post-processing step and show that it improves reconstruction accuracy when the attenuation is known to be non-negative, as in standard X-ray attenuation imaging.

A key advantage of the torus based inversion is that the same measurement data can be exploited in multiple ways. Motivated by this observation, we introduce two extensions of the original Torus CT approach. First, the Star TCT method extends the recoverable frequency coverage by recovering additional Fourier coefficients beyond the standard frequency box without requiring any additional data. Second, we implement a reconstruction method based on torus backprojection (TBP) \cite[Theorem 1.3]{R20}, which provides an alternative image-domain reconstruction strategy derived from the same underlying theory. We further introduce regularized TBP formulations to improve reconstruction quality in the presence of noise.


The main contributions of this work are as follows:
\begin{itemize}
    \item Extension of Torus CT to experimentally acquired fan-beam X-ray data.
    \item Revision and extension of the original implementation of Torus CT.
    \item Introduction of positivity-constrained post-processing approach.
    \item Introduction of the Star TCT method for extended frequency recovery.
    \item Development of torus backprojection (TBP) and its regularized variants.
    \item Simulations and analysis of experimental data let us verify that the proposed Torus CT methods give reliable CT reconstructions.
\end{itemize}

The paper is organized as follows. In Section \ref{sec:methods}, we recall the theoretical framework of Torus CT following \cite{IKR19} and present the extensions developed in this work. Section \ref{sec:implementation} describes the numerical implementation of the methods, including Star TCT and torus backprojection and its regularization. Sections \ref{sec:simulated} and \ref{sec:realdata} present numerical results using simulated and real data, respectively, and a comparison with filtered backprojection. Conclusions are given in Section \ref{sec:conclusions}.

\section{Torus CT method}
\label{sec:methods}

In this section, we recall the theoretical framework of the Torus CT method as presented in \cite{IKR19}, which forms the basis for our numerical implementation with both simulated and real data. In addition, we introduce the torus backprojection (TBP) method following \cite{R20}, and derive its regularized formulation. We also include a pointwise positivity constraint as a post-processing step.

The reconstruction method is based on the Fourier series representation of functions and the geodesic X-ray transform on $\mathbb{T}^2$. The presentation of the methodology is limited to definitions and formulas used directly in the implementation. The core theoretical results related to Torus CT are recalled from \cite{IKR19}, while the TBP formulation follows \cite{R20}.

\subsection{X-ray transform on $\mathbb{T}^2$}
Let the flat two-dimensional torus be defined as the quotient $\mathbb{T}^2:=\mathbb{R}^2/\mathbb{Z}^2$ and the quotient mapping denoted by $\left[\cdot \right]:\mathbb{R}^2\rightarrow \mathbb{T}^2$. Functions on $\mathbb{T}^2$ can be identified with $\mathbb{Z}^2$-periodic functions on $\mathbb{R}^2$ via the quotient mapping.

The geodesic X-ray transform on closed Riemannian manifolds is defined as a collection of line integrals over periodic geodesics. On $\mathbb{T}^2$, all geodesics are parametrized by $\gamma_{x,v}(t):=[x+tv], x\in[0,1]^2, v\in\mathbb{R}^2\setminus\{0\}$. Such a geodesic is periodic if and only if $v$ is a multiple of a rational vector. In particular, if $v\in\mathbb{Z}^2\setminus\{0\}$, then $\gamma_{x,v}$ is periodic with a period one.

Let $\mathcal{T}:=C^\infty(\mathbb{T}^2)$ denote the space of test functions and its dual $\mathcal{T}'$ its dual. The geodesic X-ray transform on $\mathbb{T}^2$ is defined for $f\in\mathcal{T}$ by
\begin{equation}
    \mathcal{I}f(x,v):=\int_0^1 f(\gamma_{x,v}(t))~dt, \quad x\in [0,1]^2, \quad v\in\mathbb{Z}^2\setminus\{0\}.
\end{equation}
For each fixed direction $v$, the operator $f\mapsto \mathcal{I}f(\cdot,v)$ is formally self-adjoint on $\mathcal{T}$. By duality, the definition extends to distributions $f\in\mathcal{T}'$ as described in \cite{IKR19}.

\subsection{Fourier representation and inversion on $\mathbb{T}^n$}
For $f\in\mathcal{T}'$, the Fourier coefficients are defined by $\hat{f}(k):=f(e^{-2\pi i k\cdot x})$, $k\in\mathbb{Z}^2$, and the Fourier series
\begin{equation}
    f(x) = \sum_{k\in\mathbb{Z}^2}\hat{f}(k)e^{2\pi i k\cdot x}
\end{equation}
converges in the sense of distributions.

The inversion of the geodesic X-ray transform on $\mathbb{T}^2$ is based on the Fourier slice theorem.
\begin{theorem}[Fourier slice theorem {\cite[Theorem 5]{IKR19}}]\label{thm:FourierSlice} Let $f\in\mathcal{T}'$ and $v\in\mathbb{Z}^2\setminus\{0\}$. The Fourier transform of the X-ray data satisfies
\begin{equation}
    \widehat{\mathcal{I}f}(k,v) = \begin{cases}
\hat{f}(k),&k\cdot v = 0,\\
0,&k\cdot v  \neq 0.
\end{cases}
\end{equation}
\end{theorem}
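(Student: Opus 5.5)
We first prove the identity for a single trigonometric monomial, extend it to $f\in\mathcal{T}$ by linearity and uniform convergence, and then pass to $f\in\mathcal{T}'$ by duality. Throughout, $v\in\mathbb{Z}^2\setminus\{0\}$ is fixed and $\mathcal{I}f(\cdot,v)$ is viewed as a function of $x\in\mathbb{T}^2$. This is legitimate because $[x+m+tv]=[x+tv]$ for $m\in\mathbb{Z}^2$, so $\mathcal{I}f(\cdot,v)$ is $\mathbb{Z}^2$-periodic and has Fourier coefficients in $x$.

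\textbf{Monomials.} Take $e_k(x)=e^{2\pi i k\cdot x}$. Since $e_k$ is $\mathbb{Z}^2$-periodic, we have $e_k(\gamma_{x,v}(t))=e^{2\pi i k\cdot x}e^{2\pi i t\,k\cdot v}$. Because $k\cdot v\in\mathbb{Z}$,
\[
\mathcal{I}e_k(x,v)=e_k(x)\int_0^1 e^{2\pi i t\,k\cdot v}\,dt=\begin{cases} e_k(x),& k\cdot v=0,\\ 0,& k\cdot v\neq 0.\end{cases}
\]
So $\mathcal{I}(\cdot,v)$ acts as the Fourier multiplier given by the indicator $\chi_v(k)$ of the line $\{k: k\cdot v=0\}$.

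\textbf{Smooth case.} For $f\in\mathcal{T}$ the Fourier series $f=\sum_k\hat f(k)e_k$ converges absolutely and uniformly, since $\hat f$ decays rapidly. We may therefore interchange the $t$-integral with the sum and obtain $\mathcal{I}f(\cdot,v)=\sum_k\chi_v(k)\hat f(k)e_k$. This series is again uniformly convergent, and reading off its coefficients gives $\widehat{\mathcal{I}f}(k,v)=\chi_v(k)\hat f(k)$, as claimed. Alternatively, one can compute the coefficient directly: by Fubini, $\int_{\mathbb{T}^2}\int_0^1 f(x+tv)e^{-2\pi i k\cdot x}\,dt\,dx$, and the substitution $x\mapsto x-tv$ (which preserves Haar measure on $\mathbb{T}^2$) turns this into $\hat f(k)\int_0^1 e^{2\pi i t k\cdot v}\,dt$.

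\textbf{Distributions.} This is the only step needing care, because it depends on the precise duality definition from \cite{IKR19}. The natural definition is $\mathcal{I}f(\cdot,v)(\phi):=f(\mathcal{I}\phi(\cdot,v))$ for $\phi\in\mathcal{T}$, using that $\mathcal{I}(\cdot,v)$ is formally self-adjoint and maps $\mathcal{T}$ to $\mathcal{T}$. Smoothness follows from differentiating under the integral, and the map is continuous because $\sup|\partial^\alpha\mathcal{I}\phi|\le\sup|\partial^\alpha\phi|$. Self-adjointness is checked by the same translation-invariance argument: $\int\!\int_0^1 \phi(x+tv)\psi(x)\,dt\,dx=\int\phi(x)\int_0^1\psi(x-tv)\,dt\,dx$, and $\int_0^1\psi(x-tv)\,dt=\mathcal{I}\psi(x,v)$ by the change of variable $t\mapsto 1-t$ together with periodicity in $t$. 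Then
\[
\widehat{\mathcal{I}f}(k,v)=\mathcal{I}f(\cdot,v)(e_{-k})=f(\mathcal{I}e_{-k}(\cdot,v))=\chi_v(-k)f(e_{-k})=\chi_v(k)\hat f(k),
\]
using the monomial computation and $\chi_v(-k)=\chi_v(k)$. If \cite{IKR19} uses a bilinear pairing with some convention differing by a conjugation, only the signs of $k$ must be tracked, and the symmetry $\chi_v(-k)=\chi_v(k)$ makes the result insensitive to this.
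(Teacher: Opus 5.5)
Your proof is correct, including the duality step: the self-adjointness check, the continuity bound $\sup|\partial^\alpha\mathcal{I}\phi|\le\sup|\partial^\alpha\phi|$, and the symmetry $\chi_v(-k)=\chi_v(k)$ all hold. The paper gives no proof of its own and simply cites \cite[Theorem 5]{IKR19}; your route (compute $\mathcal{I}(\cdot,v)$ on the exponentials $e^{2\pi i k\cdot x}$, then pass to $\mathcal{T}'$ via $\mathcal{I}f(\cdot,v)(\phi)=f(\mathcal{I}\phi(\cdot,v))$, justified by formal self-adjointness) is the duality extension the paper's setup points to, and is the natural argument behind that citation.
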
 
As a consequence, each Fourier coefficient of $f$ can be recovered from X-ray data taken in a direction orthogonal to the corresponding frequency. The following theorem summarizes the explicit reconstruction formula and is the main theoretical result underlying the inversion.
\begin{theorem}[{ \cite[Theorem 1]{IKR19}}]
\label{thm:inversion}
    Suppose that $f\in L^1(\mathbb{T}^2)$. Let $k\in\mathbb{Z}^2$. If $k\neq 0$ and $v\in\mathbb{Z}^2\setminus\{0\}$ satisfies $v\perp k$, then
    \begin{equation}
        \hat{f}(k)=\begin{cases}
            \int_0^1\mathcal{I}_v f(0,y)\exp{(-2\pi i k_2 y)}~dy, & k_2\neq 0,\\
            \int_0^1\mathcal{I}_v f(x,0)\exp{(-2\pi i k_1 y)}~dy, & k_1\neq 0.
        \end{cases}
    \end{equation}
    If $k=0$, then
    \begin{equation}
        \label{eq:mean}\hat{f}(0)=\int_0^1\mathcal{I}_{(1,0)}f(0,y)~dy=\int_0^1\mathcal{I}_{(0,1)}f(x,0)~dx.
    \end{equation}
\end{theorem}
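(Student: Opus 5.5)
We prove the formula by computing, for a fixed direction $v$, the Fourier coefficients of the function $x\mapsto\mathcal{I}_v f(x):=\mathcal{I}f(x,v)$ on $\mathbb{T}^2$, and then restricting to the coordinate line $\{x=0\}$ or $\{y=0\}$. Theorem~\ref{thm:FourierSlice} already gives $\widehat{\mathcal{I}_v f}(m)=\hat f(m)$ when $m\cdot v=0$ and $0$ otherwise. Hence, for $f\in L^1(\mathbb{T}^2)$,
\[
\mathcal{I}_v f(x) = \sum_{m\in\mathbb{Z}^2,\ m\perp v}\hat f(m)e^{2\pi i m\cdot x}
\]
in the sense of distributions. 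Since $v\in\mathbb{Z}^2\setminus\{0\}$, the lattice $\{m\in\mathbb{Z}^2 : m\cdot v=0\}$ equals $\mathbb{Z}w$, where $w=(-v_2,v_1)/\gcd(v_1,v_2)$ is primitive. So $\mathcal{I}_vf(x)=g(w\cdot x)$ for a one-variable $1$-periodic function $g(s)=\sum_{j}\hat f(jw)e^{2\pi i j s}$, and $g\in L^1(\mathbb{T}^1)$. This shows directly that $\mathcal{I}_v f$ is constant along the geodesics in direction $v$.

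Now take $k\neq0$ with $k\perp v$, so that $k=j_0 w$ for some $j_0\neq 0$. Suppose first that $k_2\neq0$, so $w_2\neq0$. Restricting to $x=(0,y)$ gives
\[
\mathcal{I}_vf(0,y)=g(w_2 y)=\sum_j \hat f(jw)e^{2\pi i j w_2 y}.
\]
Integrating against $e^{-2\pi i k_2 y}=e^{-2\pi i j_0 w_2 y}$ over $[0,1]$, orthogonality of $e^{2\pi i n y}$ on $[0,1]$ kills every term except $j=j_0$, because $jw_2=j_0w_2$ forces $j=j_0$ when $w_2\neq0$. This yields $\hat f(k)$. The case $k_1\neq0$ is symmetric, restricting to $(x,0)$ and integrating in $x$. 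For $k=0$ we may take $v=(1,0)$, so $w=(0,1)$. Then $\mathcal{I}_{(1,0)}f(0,y)=\sum_j\hat f(0,j)e^{2\pi i j y}$, and integrating in $y$ gives $\hat f(0)$; the same works with $v=(0,1)$. Alternatively, for $k=0$ one can use Fubini directly: $\int_0^1\int_0^1 f(t,y)\,dt\,dy=\hat f(0)$.

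The main obstacle is rigor in the restriction step. An $L^1$ function on $\mathbb{T}^2$ has no well-defined trace on a line, and a distributional Fourier series cannot simply be evaluated on $\{x=0\}$. To get around this, I would avoid series and argue concretely, using Fubini. Since $v$ is integral, the map $(y,t)\mapsto[(0,y)+tv]$ from $[0,1]^2$ onto $\mathbb{T}^2$ is a $|v_1|$-to-one covering when $v_1\neq0$, with constant Jacobian $|v_1|$. Therefore, for $f\in L^1$, the integral $\int_0^1 f((0,y)+tv)\,dt$ exists for a.e.\ $y$ and is integrable in $y$, so $\mathcal{I}_vf(0,\cdot)\in L^1([0,1])$. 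I would then compute
\[
\int_0^1\!\!\int_0^1 f((0,y)+tv)\,e^{-2\pi i k_2 y}\,dt\,dy .
\]
Because $k\cdot v=0$, we have $k_2 y = k\cdot((0,y)+tv)$, so the exponential can be rewritten as $e^{-2\pi i k\cdot((0,y)+tv)}$. The change of variables then turns the double integral into $\frac{1}{|v_1|}\cdot|v_1|\int_{\mathbb{T}^2} f\,e^{-2\pi i k\cdot x}\,dx=\hat f(k)$. The bookkeeping of the covering multiplicity is what needs care. The degenerate case $v_1=0$ forces $k_2=0$ and falls under the other formula, which is handled by the symmetric change of variables.
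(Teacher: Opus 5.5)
The paper gives no proof of this statement; it is quoted from \cite{IKR19} and presented only as a consequence of Theorem~\ref{thm:FourierSlice}. There is therefore no in-paper argument to match. Your proof is correct, and its rigorous part takes a more direct route than that slice-theorem framing.

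Your first, series-based sketch is the natural slice-theorem route: write $\hat f(k)=\widehat{\mathcal{I}_vf}(k)$, then collapse the Fourier integral of the $v$-invariant function $\mathcal{I}_vf$ onto the transversal line. As you note, for $f\in L^1$ this is not rigorous as written. The trace of $\mathcal{I}_vf$ on $\{x_1=0\}$ needs justification, and the distributional series cannot be integrated term by term.

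Your Fubini argument fixes both issues at once. The map $\Phi(y,t)=[(0,y)+tv]$ is the torus endomorphism with integer matrix of determinant $-v_1$. It is therefore $|v_1|$-to-one with Jacobian $|v_1|$, so $\int_{[0,1]^2}F\circ\Phi=\int_{\mathbb{T}^2}F$ for $F\in L^1(\mathbb{T}^2)$. Applying this to $|f|$ shows that $\mathcal{I}_vf(0,\cdot)\in L^1(0,1)$ with norm at most $\|f\|_{L^1}$. Applying it to $F(x)=f(x)e^{-2\pi i k\cdot x}$, together with $k\cdot\Phi(y,t)=k_2y$, gives $\hat f(k)$ without invoking Theorem~\ref{thm:FourierSlice} at all. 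It also shows that, given $k\neq0$ and $k\perp v$, the hypothesis $k_2\neq0$ is exactly the non-degeneracy $v_1\neq0$ of $\Phi$.

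You read the second case correctly as $\int_0^1\mathcal{I}_vf(x,0)e^{-2\pi i k_1x}\,dx$. The printed statement has a typo there, with $y$ in place of $x$.
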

The function $f$ is reconstructed by its Fourier series using the coefficients obtained from the above formulas.

\subsection{Adjoint and regularization}

Let $Q\subset \mathbb{Z}^2$ be the set of primitive integer directions, chosen such that every non-zero $v\in \mathbb{Z}^2$ is an integer multiple of a unique element of $Q$. The geodesic X-ray transform maps functions on $\mathbb{T}^2$ to functions on $\mathbb{T}^2\times Q$.

For $s\in\mathbb{R}$, we study functions in Sobolev spaces $H^s(\mathbb{T}^2)$ with norm 
\begin{equation}
    \| f \|^2_{H^s(\mathbb{T}^2)}=\sum_{k\in\mathbb{Z}^2}\langle k \rangle^{2s}| \hat{f}(k) |^2,
\end{equation}
where $\langle k \rangle = (1+|k|^2)^{\frac{1}{2}}$. The data space $\mathbb{T}^2\times Q$ is defined by the norm 
\begin{equation}
    \| g \|^2_{H^s(\mathbb{T}^2\times Q)} = |\hat{g}(0,0)|^2 + \sum_{k\in\mathbb{Z}^2\setminus 0} \sum_{v\in Q} \langle k\rangle ^{2s}| \hat{g}(k,v) |^2.
\end{equation}
For any $s\in\mathbb{R}$, the adjoint of $\mathcal{I}:H^s(\mathbb{T}^2)\rightarrow H^s(\mathbb{T}^2\times Q)$ is given by 
\begin{equation}
\label{eq:adjoint}
    \widehat{\mathcal{I}^*g}(k)=\hat{g}(k,\hat{k}^\perp), \qquad k\in \mathbb{Z}^2,
\end{equation}
which satisfies $\mathcal{I}^*\mathcal{I}=\mathrm{Id}$ \citep[Proposition 11]{IKR19}.

To stabilize the inversion in the presence of noise, we use Tikhonov regularization. Let $r\in \mathbb{R}$ and $s\geq r$. Given data $g\in H^r(\mathbb{T}^2\times Q)$ and $\alpha>0$, we define 
\begin{equation}
\label{eq:tikhonov}
    f = \arg \min_{f\in H^r(\mathbb{T}^2)}(\| \mathcal{I}f - g \|_{H^r(\mathbb{T}^2\times Q)} + \alpha \| f \|_{H^s(\mathbb{T}^2)}).
\end{equation}
The unique minimizer is $f = P_\alpha^{s-r}\mathcal{I}^*g$, with
\begin{equation}
\widehat{P_\alpha^{s-r}h}(k)=\frac{1}{1+\alpha \langle k \rangle^{2(s-r)}}\hat{h}(k),
\end{equation}
satisfying $f\in H^{2s-r}(\mathbb{T}^2)\subset H^r(\mathbb{T}^2)$ \citep[Theorem 2]{IKR19}.

\subsection{Discrete directions}

The reconstruction is done in the Fourier domain by recovering the Fourier coefficients $\hat{f}(k)$ for frequencies $k\in\mathbb{Z}^2$ contained in a finite box $\mathbb{Z}^2_N:=[-N,N]^2\cap \mathbb{Z}^2$. By the Fourier slice theorem, each coefficient $\hat{f}(k)$ can be recovered from X-ray data taken in direction $v\in \mathbb{Z}^2\setminus\{ 0\}$ satisfying $v\perp k$. Therefore, the set of discrete measurement directions must be chosen so that for every $k\in\mathbb{Z}_N$ there exists at least one integer direction orthogonal to $k$. We define the set of admissible directions by
\begin{equation}
    A_N:=\{ v\in\mathbb{Z}^2\setminus\{0\};v\in k^\perp ~\mathrm{for~ some} ~k\in \mathbb{Z}_N^2\}.
\end{equation}
In practice, redundant directions are removed by restricting to primitive integer vectors. That is, vectors $v = (v_1,v_2)$ satisfying $\gcd (v_1,v_2)=1$ together with axial directions $(1,0)$ and $(0,1)$. Therefore, we define set $Q_N$ as
\begin{equation}
    Q_N := \{ v\in A_N : \gcd(v_1,v_2)=1\}. 
\end{equation} 
This guarantees that all Fourier coefficients in $\mathbb{Z}_N^2$ are determined while keeping the number of projection directions minimal. The directions are illustrated in Figure \ref{fig:angles}.

\begin{figure}[h]
    \centering

    \begin{subfigure}{0.2\textwidth}
        \centering
        \includegraphics[width=\linewidth]{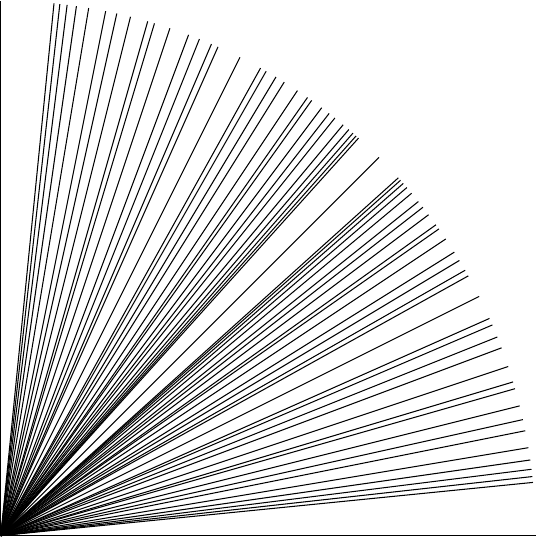}
        \caption{}
    \end{subfigure}
    \hspace{0.04\textwidth}
    \begin{subfigure}{0.2\textwidth}
        \centering
        \includegraphics[width=\linewidth]{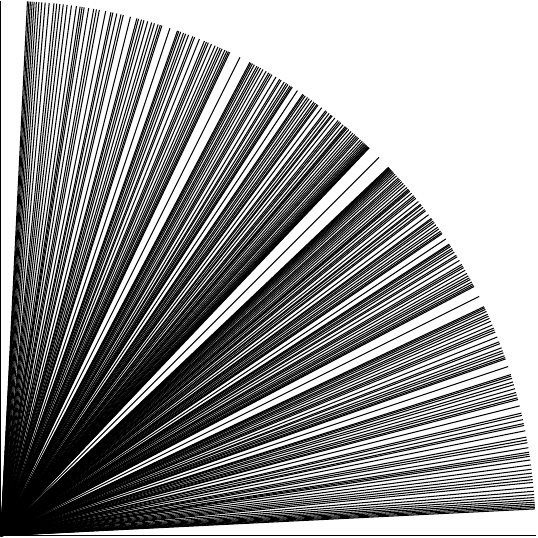}
        \caption{}
    \end{subfigure}
    \hspace{0.04\textwidth}
    \begin{subfigure}{0.2\textwidth}
        \centering
        \includegraphics[width=\linewidth]{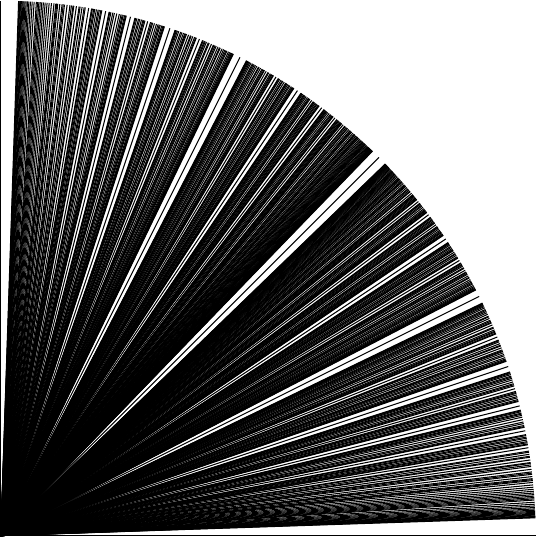}
        \caption{}
    \end{subfigure}

    \caption{Discrete measurement directions $Q_N$ for (A) $N=10$, (B) $N=20$ and (C) $N=30$ between $0^\circ$ and $90^\circ$. The directions are sparse near the axial and diagonal directions.}
    \label{fig:angles}
\end{figure}

\subsection{Torus backprojection}

The following inversion formula by \cite{R20} provides an alternative reconstruction method which is based on summing the X-ray data directly over all measurement directions.

\begin{theorem}[{\cite[Theorem 1.3]{R20}}]
\label{thm:TBP}Let $f\in \mathcal{T}'$ with $\hat{f}(0)=0$. Then \begin{equation}
    f = \sum_{v\in Q} \mathcal{I}_v f,
\end{equation}
where sum is taken over all primitive integer directions $v\in Q$ and $\mathcal{I}_vf:=\mathcal{I}f(\cdot,v)$ denotes the X-ray data in direction $v$.
\end{theorem}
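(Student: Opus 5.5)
The plan is to verify the identity coefficient by coefficient on the Fourier side, using the Fourier slice theorem (Theorem \ref{thm:FourierSlice}). For a fixed $v\in Q$, the function $\mathcal{I}_v f$ is a distribution on $\mathbb{T}^2$. By Theorem \ref{thm:FourierSlice}, its Fourier coefficients are
\[
\widehat{\mathcal{I}_v f}(k)=\hat f(k)\quad\text{if } k\cdot v=0,
\qquad
\widehat{\mathcal{I}_v f}(k)=0\quad\text{otherwise}.
\]
Summing formally over $v\in Q$, the $k$-th Fourier coefficient of $\sum_{v\in Q}\mathcal{I}_v f$ should therefore be
\[
\hat f(k)\cdot\#\{v\in Q: v\perp k\}.
\]

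The first step is to count the orthogonal primitive directions. For $k\neq 0$, the orthogonal complement $k^\perp\cap\mathbb{Z}^2$ is a rank-one lattice $\mathbb{Z}\hat k^\perp$, where $\hat k^\perp$ is a primitive vector. Since $Q$ contains exactly one representative of each line through the origin (each nonzero integer vector is an integer multiple of a unique element of $Q$), exactly one $v\in Q$ satisfies $v\perp k$, namely $v=\hat k^\perp$ with the sign fixed by $Q$. Hence the $k$-th coefficient of the sum equals $\hat f(k)$ for every $k\neq0$. For $k=0$, every $v$ is orthogonal to $k$, so the count is infinite. This is exactly where the hypothesis $\hat f(0)=0$ is used: each term contributes $\hat f(0)=0$, so the zeroth coefficient of the sum is $0=\hat f(0)$.

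The second step, which I expect to be the main obstacle, is to make sense of the infinite sum over $Q$ and to justify exchanging it with the Fourier coefficients. I would interpret the sum as a limit in $\mathcal{T}'$ of partial sums over an exhausting family $Q_M\uparrow Q$ of finite sets, for instance the sets $Q_N$ defined above. To test convergence, pair each partial sum with $\phi\in\mathcal{T}$ and use self-adjointness of $f\mapsto \mathcal{I}_v f$:
\[
\sum_{v\in Q_M}\langle \mathcal{I}_v f,\phi\rangle=\Bigl\langle f,\sum_{v\in Q_M}\mathcal{I}_v\phi\Bigr\rangle .
\]
Now apply the counting argument to $\phi$. The Fourier coefficients of $\sum_{v\in Q_M}\mathcal{I}_v\phi$ equal $\hat\phi(k)$ for those $k\neq0$ whose direction $\hat k^\perp$ lies in $Q_M$, are $0$ for the remaining $k\neq 0$, and equal $(\#Q_M)\hat\phi(0)$ at $k=0$.

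The zero-mode term pairs with $\hat f(0)=0$ and drops out; this is the only place the hypothesis is needed. The remaining task is to show that the truncated series $\sum_{k\neq0,\ \hat k^\perp\in Q_M}\hat\phi(k)e^{2\pi i k\cdot x}$ converges to $\phi-\hat\phi(0)$ in $C^\infty(\mathbb{T}^2)$. I would do this by dominated convergence on the rapidly decaying coefficients $\hat\phi(k)$, using that the set of included frequencies increases to all of $\mathbb{Z}^2\setminus\{0\}$. Since $f\in\mathcal{T}'$ is continuous on $\mathcal{T}$, it follows that
\[
\sum_{v\in Q_M}\langle \mathcal{I}_v f,\phi\rangle\to\langle f,\phi\rangle ,
\]
which is the claimed identity in the sense of distributions.
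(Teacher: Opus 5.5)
Your proof is correct. The paper does not prove this theorem itself; it is quoted from \cite{R20}. The closest argument in the paper is the proof of Proposition~\ref{prop:f_N_convergence}. When $\hat f(0)=0$ that proposition is exactly your statement, since then $f_N=\sum_{v\in Q_N}\mathcal{I}_v f$.

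The paper's proof works directly with the coefficients of $f$. By the Fourier slice theorem, $\hat f_N(k)=\hat f(k)$ for $k\in\mathbb{Z}^2_N$, and $|\hat f_N(\xi)-\hat f(\xi)|\le|\hat f(\xi)|$ otherwise. It then uses that every $f\in\mathcal{T}'$ lies in some $H^t(\mathbb{T}^2)$ to bound $\|f_N-f\|_{H^t}^2$ by the tail $\sum_{\xi\notin\mathbb{Z}^2_N}\langle\xi\rangle^{2t}|\hat f(\xi)|^2$.

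You instead move $\mathcal{I}_v$ onto the test function by duality and run the tail argument on the rapidly decaying $\hat\phi$. This buys three things:
\begin{enumerate}
\item It needs only the continuity of $f$ on $C^\infty(\mathbb{T}^2)$ and the Fourier slice theorem for smooth functions.
\item It isolates the divergent zero-mode term $(\#Q_M)\,\hat\phi(0)$ as the one place where $\hat f(0)=0$ is used.
\item It applies verbatim to any exhaustion $Q_M\uparrow Q$, so the value of the sum does not depend on how $Q$ is enumerated.
\end{enumerate}

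In exchange, the paper's route gives a stronger mode of convergence: norm convergence in the Sobolev space $H^t(\mathbb{T}^2)$ containing $f$, with an explicit tail bound, rather than only convergence in $\mathcal{T}'$.
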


The formula states that $f$ is recovered exactly as a sum of its X-ray projections over all primitive directions, without any Fourier analysis or filtering. We refer to this method as TBP (torus backprojection). The zero-average condition $\hat{f}(0)=0$ eliminates the zero Fourier mode from the reconstruction formula. This is necessary in the infinite sum, since each term $\mathcal{I}_v f$ contributes the zero mode, and summing over all directions would otherwise result in an infinite contribution.

In the discrete setting, the infinite sum is approximated by a finite set of directions $Q_N\subset Q$, and the mean value can be subtracted $(|Q_N|-1)$ times, leaving the correct single contribution. The mean-value correction is incorporated into the sum as
\begin{equation}
\label{eq:f_N_TBP}
    f_{N}(x)=\sum_{v\in Q_N}(\mathcal{I}_vf(x) - \frac{|Q_N|-1}{|Q_N|}\hat{f}(0)),
\end{equation}
where $\hat{f}(0)$ is computed from the data as in \eqref{eq:mean}.

Next, we state and prove a new convergence and reconstruction result related to this computationally relevant solution to the inverse problem at hand.
\begin{proposition}
\label{prop:f_N_convergence}
    Let $f\in\mathcal{T}'$. Then $f_N$ converges to $f$ in the sense of distributions, i.e.,
    \begin{equation}
        \lim_{N\rightarrow \infty} f_N = f \quad \text{in } \mathcal{T'}.
    \end{equation}
    Moreover, if $f\in H^t(\mathbb{T}^2)$ for some $t\in \mathbb{R}$, then $f_N\rightarrow f$ in $H^t(\mathbb{T}^2)$.
\end{proposition}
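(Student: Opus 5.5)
We plan to compute the Fourier coefficients of $f_N$ exactly and show that $f_N$ is simply a Fourier truncation of $f$ to a set of frequencies containing the box $\mathbb{Z}^2_N$. Both convergence statements then reduce to tail estimates of convergent series.

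\textbf{Step 1: Fourier coefficients of $f_N$.} Since $f_N$ is a finite sum of the distributions $\mathcal{I}_v f$ and constants, it is a well-defined element of $\mathcal{T}'$. By linearity and Theorem~\ref{thm:FourierSlice},
\begin{equation*}
\widehat{f_N}(k)=\sum_{v\in Q_N}\widehat{\mathcal{I}_v f}(k)-(|Q_N|-1)\hat f(0)\,\delta_{k,0}
=\#\{v\in Q_N: v\cdot k=0\}\,\hat f(k)-(|Q_N|-1)\hat f(0)\,\delta_{k,0}.
\end{equation*}

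\textbf{Step 2: the two cases.} For $k=0$ every $v$ is orthogonal to $k$, so $\widehat{f_N}(0)=|Q_N|\hat f(0)-(|Q_N|-1)\hat f(0)=\hat f(0)$.

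For $k\neq 0$, the integer vectors orthogonal to $k$ are exactly the integer multiples of a single primitive vector $\pm\hat k^\perp$. Because every nonzero integer vector is a multiple of a unique element of $Q$, exactly one element of $Q$ is orthogonal to $k$. Hence, using $Q_N\subset Q$, we get $\widehat{f_N}(k)=\mathbf 1_{S_N}(k)\hat f(k)$, where
\begin{equation*}
S_N:=\{0\}\cup\{k\neq0:\ \text{the element of }Q\text{ orthogonal to }k\text{ lies in }Q_N\}.
\end{equation*}
By the definition of $A_N$ and $Q_N$, every $k\in\mathbb{Z}^2_N$ has an orthogonal direction in $Q_N$. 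Therefore $\mathbb{Z}^2_N\subset S_N$, and so $f-f_N$ has Fourier support in $\{|k|_\infty>N\}$.

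The step needing the most care is the bookkeeping in this step. We must make sure that $Q_N$ really consists of the chosen representatives in $Q$, with a consistent sign convention (so $v$ and $-v$ are never both counted), so that no frequency is counted twice. We will state this convention explicitly; everything else is routine.

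\textbf{Step 3: convergence in $\mathcal{T}'$.} For $\phi\in\mathcal{T}$ we have
\begin{equation*}
|(f-f_N)(\phi)|\le\sum_{|k|_\infty>N}|\hat f(k)||\hat\phi(-k)|.
\end{equation*}
Since $f\in\mathcal{T}'$, there are $C,m$ with $|\hat f(k)|\le C\langle k\rangle^m$. Since $\hat\phi$ decays faster than any polynomial, the full series $\sum_k|\hat f(k)||\hat\phi(-k)|$ converges, and its tail tends to $0$ as $N\to\infty$.

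\textbf{Step 4: convergence in $H^t$.} If $f\in H^t(\mathbb{T}^2)$, then
\begin{equation*}
\|f-f_N\|_{H^t}^2\le\sum_{|k|_\infty>N}\langle k\rangle^{2t}|\hat f(k)|^2.
\end{equation*}
This is the tail of the convergent series defining $\|f\|_{H^t}^2$, so it tends to $0$. In particular $f_N\in H^t$, since its coefficients are a subset of those of $f$.
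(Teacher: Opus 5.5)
Your proposal is correct and follows essentially the same route as the paper: compute $\widehat{f_N}$ via the Fourier slice theorem. You then observe that it agrees with $\hat f$ on $\mathbb{Z}^2_N$ (the mean correction handles $k=0$) and equals either $0$ or $\hat f(k)$ elsewhere, and bound the error by the tail $\sum_{|k|_\infty>N}\langle k\rangle^{2t}|\hat f(k)|^2$.

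The differences are minor. The paper obtains convergence in $\mathcal{T}'$ from the $H^t$ estimate, using that every $f\in\mathcal{T}'$ lies in some $H^t(\mathbb{T}^2)$; you estimate the pairing directly via polynomial growth of $\hat f$, which is the same fact. Your explicit convention $Q_N\subset Q$ (one sign per direction) also makes precise the uniqueness of the orthogonal direction that the paper uses tacitly in the bound $|\widehat{f_N}-\hat f|\le|\hat f|$.
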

\begin{proof} By the Fourier slice theorem~\ref{thm:FourierSlice}, it holds that $$\mathcal{I}_vf(x) = \sum_{v \cdot k=0, \,k \in \mathbb{Z} ^2} \hat{f}(k)e^{2\pi i k \cdot x}$$
for any $v \in \mathbb{Z}^2 \setminus \{0\}$, in the sense of distributions, with a slight abuse of notation. Furthermore, from the fact $f \in \mathcal{T}'$, one notices that $\mathcal{I}_vf \in \mathcal{T}'$ for any $v \in \mathbb{Z}^2 \setminus \{0\}$. Since $Q_N$ is a finite set, it follows that $f_N \in \mathcal{T}'$ as a finite sum of distributions for any $N \in \mathbb{N}$.

Let now $k \in \mathbb{Z}^2 \setminus \{0\}$. We show that there exists $N_k \geq 0$ such that for any $N \geq N_k$ it holds that $\hat{f}_N(k)=\hat{f}(k)$. It follows from the definition of $Q_N$ that for any $N \geq |k|_\infty$ there exists $v \in Q_N$ such that $v \cdot k = 0$. One may now compute, using the Fourier slice theorem~\ref{thm:FourierSlice} and the definition of $f_N$, that $\hat{f}_N(k)=\hat{f}(k)$ for any $N \geq N_k := |k|_{\infty}$. On the other hand, one can compute that $$\hat{f}_N(0) = \sum_{v \in Q_N} (\widehat{\mathcal{I}_vf}(0) - \frac{|Q_N|-1}{|Q_N|}\hat{f}(0)) = \sum_{v \in Q_N} (\hat{f}(0) - \frac{|Q_N|-1}{|Q_N|}\hat{f}(0)) = \hat{f}(0)$$
for any $N \in \mathbb{N}$. In particular, $\hat{f}_N(k) = \hat{f}(k)$ for all $k\in\mathbb{Z}^2_N$.

Since $f\in\mathcal{T}'$, there exists $t\in\mathbb{R}$ such that $f\in H^t(\mathbb{T}^2)$. We can now estimate
\begin{align}
    \| f_{N} - f \|_{H^t(\mathbb{T}^2)} &= \sum _{\xi \in \mathbb{Z}^2} \langle  \xi\rangle^{2t} | (\hat{f}_{N} - \hat{f})(\xi)|^2 = \sum_{\xi\in\mathbb{Z}^2\setminus \mathbb{Z}^2_{N}} \langle  \xi\rangle^{2t} | (\hat{f}_{N} - \hat{f})(\xi)|^2\\ &\leq \sum_{\xi\in\mathbb{Z}^2\setminus \mathbb{Z}^2_{N}} \langle  \xi\rangle^{2t} | \hat{f}(\xi)|^2 \rightarrow 0, \quad \text{as } N \rightarrow \infty.
\end{align}

Hence, the limit $[\lim_{N\to\infty}f_N](\phi) := \lim_{N \to \infty} \langle f_N, \phi \rangle$, where $\phi \in \mathcal{T}$ is a test function, defines a distribution in $\mathcal{T}'$, which agrees with $f$ as they have the same Fourier series coefficients.
\end{proof}

We next state and prove the corresponding approximation result for the Tikhonov regularized reconstruction method associated with $f_N$.
\begin{theorem}[Filtered torus backprojection]\label{thm:filtered-TBP}
    Let $r\in\mathbb{R}, s\geq r$, $\alpha>0$, and suppose $f\in\mathcal{T}'$. Let $f^{\alpha,s}$ denote the Tikhonov regularized solution corresponding to \eqref{eq:tikhonov} with data $g=\mathcal{I}f$. Then in the sense of distributions
    \begin{equation}
        f^{\alpha,s}=\lim_{N\to\infty} (f_N * p_{\alpha}^{\,s-r})
    \quad \text{in } \mathcal{T}',
    \end{equation}
    where $f_N$ is defined as in \eqref{eq:f_N_TBP} and
    \begin{equation}
        p_{\alpha}^{s-r}(x)=\sum_{k\in\mathbb{Z}^2}\frac{1}{1+\alpha\langle k\rangle^{2(s-r)}}e^{2\pi i k\cdot x}.
    \end{equation}
\end{theorem}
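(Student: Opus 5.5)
The plan is to work entirely on the Fourier side, where every operator involved is diagonal. First I will identify the Fourier coefficients of the Tikhonov solution. By the characterization recalled after \eqref{eq:tikhonov}, $f^{\alpha,s}=P_\alpha^{s-r}\mathcal{I}^*g$ with $g=\mathcal{I}f$. Since $\mathcal{I}^*\mathcal{I}=\mathrm{Id}$ by \eqref{eq:adjoint}, this reduces to $f^{\alpha,s}=P_\alpha^{s-r}f$. Hence
\begin{equation*}
\widehat{f^{\alpha,s}}(k)=\frac{\hat f(k)}{1+\alpha\langle k\rangle^{2(s-r)}},\qquad k\in\mathbb{Z}^2 .
\end{equation*}
The adjoint identity was stated for $H^r$ data. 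Because $f\in\mathcal{T}'$ lies in some $H^t(\mathbb{T}^2)$, I will either pick $r$ compatible with $t$ or simply note that the Fourier multiplier formula extends to all of $\mathcal{T}'$ by density/duality. Either way, this identity is what I take as the definition of $f^{\alpha,s}$ for general distributions.

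Next I compute the convolution on the torus. For $h\in\mathcal{T}'$, convolution with $p_\alpha^{s-r}$ acts by $\widehat{h*p_\alpha^{s-r}}(k)=\hat h(k)\,\widehat{p_\alpha^{s-r}}(k)$. Here $\widehat{p_\alpha^{s-r}}(k)=(1+\alpha\langle k\rangle^{2(s-r)})^{-1}$, which is bounded by $1$. So $p_\alpha^{s-r}\in\mathcal{T}'$, and the convolution is a well-defined bounded multiplier on every $H^t$. This gives
\begin{equation*}
\widehat{f_N*p_\alpha^{s-r}}(k)=\frac{\hat f_N(k)}{1+\alpha\langle k\rangle^{2(s-r)}}.
\end{equation*}

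Now I invoke Proposition~\ref{prop:f_N_convergence}. By its proof, $\hat f_N(k)=\hat f(k)$ for $k\in\mathbb{Z}_N^2$. Also $f_N\to f$ in $H^t(\mathbb{T}^2)$, where $t$ is such that $f\in H^t$. The multiplier $P_\alpha^{s-r}$ has operator norm at most $1$ on $H^t$, so
\begin{equation*}
\|f_N*p_\alpha^{s-r}-f^{\alpha,s}\|_{H^t}=\|P_\alpha^{s-r}(f_N-f)\|_{H^t}\le\|f_N-f\|_{H^t}\to0 .
\end{equation*}
Convergence in $H^t$ implies convergence in $\mathcal{T}'$, since pairing with a test function $\phi$ is bounded by $\|\cdot\|_{H^t}\|\phi\|_{H^{-t}}$. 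This proves the claim.

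The only delicate point is the first step. I must justify that the Tikhonov solution equals $P_\alpha^{s-r}f$ for arbitrary $f\in\mathcal{T}'$, not only for $f\in H^r$, and that $\mathcal{I}^*\mathcal{I}=\mathrm{Id}$ holds there. I would handle this coefficientwise: the minimization decouples over $k$, and the Fourier slice theorem (Theorem~\ref{thm:FourierSlice}) gives $\hat g(k,\hat k^\perp)=\hat f(k)$ directly. This avoids any issue with the choice of $r$.
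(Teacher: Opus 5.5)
Your proposal is correct and follows the paper's proof. Both arguments reduce to $f^{\alpha,s}=P_\alpha^{s-r}f$ via $\mathcal{I}^*\mathcal{I}=\mathrm{Id}$, identify $P_\alpha^{s-r}$ as convolution with $p_\alpha^{s-r}$, and pass to the limit using Proposition~\ref{prop:f_N_convergence}. There are two minor differences. The paper passes to the limit using continuity of convolution on $\mathcal{T}'$, while you use the $H^t$ convergence with the bound $\|P_\alpha^{s-r}\|_{H^t\to H^t}\le 1$, which gives slightly stronger $H^t$ convergence. Your coefficientwise treatment of the case $f\notin H^r$ also makes explicit a well-posedness point that the paper passes over.
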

\begin{proof}
    By \cite[Theorem 2]{IKR19}, the minimizer of \eqref{eq:tikhonov} is $f^{\alpha,s}=P_{\alpha}^{s-r}\mathcal{I}^*g$. Substituting $g=\mathcal{I}f$ and applying $\mathcal{I}^*\mathcal{I}=\mathrm{Id}$ \cite[Proposition 11]{IKR19} give
    \begin{equation}
        f^{\alpha,s}=P_{\alpha}^{s-r}\mathcal{I}^*\mathcal{I}f=P_{\alpha}^{s-r}f.
    \end{equation} We next identify $P_{\alpha}^{s-r}$ as a convolution operator. By definition
    \begin{equation}
        \widehat{P_{\alpha}^{s-r}f}(k) = \widehat{f^{\alpha,s}}(k)=\frac{1}{1+\alpha\langle k\rangle^{2(s-r)}}\widehat{f}(k) = \widehat{p_{\alpha}^{s-r}}(k)\,\widehat{f}(k).
    \end{equation}
    Now, the convolution theorem implies
    \begin{equation}
        P_{\alpha}^{s-r} f = f * p_{\alpha}^{s-r}.
    \end{equation}

    By Proposition \ref{prop:f_N_convergence}, we have $f_N\rightarrow f$ in $\mathcal{T'}$. Since convolution with $ p_{\alpha}^{s-r}$ defines a continuous linear operator on $\mathcal{T'}$, it follows that
    \begin{equation}
        f_N * p_{\alpha}^{s-r} \rightarrow f * p_{\alpha}^{s-r} \quad \text{in } \mathcal{T}' \text{ as $N \to \infty$}.
    \end{equation}
Combining the above identities, we get
    \begin{equation}
        f_N * p_{\alpha}^{s-r} \rightarrow P_{\alpha}^{s-r} f = f^{\alpha,s},
    \end{equation}
    which concludes the proof.
\end{proof}

\subsection{Positivity constraint}
\label{subsec:positivity}
In standard X-ray attenuation imaging, the attenuation coefficient is non-negative by definition, which makes the positivity constraint a natural physical prior. We define the pointwise projection
  \begin{equation}
      f_+ := \max\left(f^{\alpha,s}_\mathrm{rec}, 0 \right).
  \end{equation}
If the ground truth $f_\mathrm{true}\in L^p(\mathbb{T}^2)$ satisfies $f_\mathrm{true}\geq 0$, then for any $1\leq p \leq \infty$ it holds that
 \begin{equation}
     \| f_\mathrm{true} - f_+ \|_{L^p(\mathbb{T}^2)} \leq \| f_\mathrm{true} - f^{\alpha,s}_{\mathrm{rec}} \|_{L^p(\mathbb{T}^2)},
 \end{equation}
since replacing negative values of $f_\mathrm{rec}$ by zero reduces the pointwise error whenever $f^{\alpha,s}_\mathrm{rec}<0$. In practice, $f_+$ is applied as a post-processing step after the regularized reconstruction $f^{\alpha,s}_\mathrm{rec}$.

\section{Numerical implementation}
\label{sec:implementation}

In the original work \cite{IKR19}, forward models for the geodesic X-ray transform on $\mathbb{T}^2$ were developed, including both analytical and torus-projection based approaches. In this study, we revisit these experiments using an updated implementation of the forward operator $\mathcal{A}_{\mathbb{T}^2}$, which removes certain numerical and simulation errors that were present in the previous work.

In addition, we introduce two extensions of the reconstruction framework. First, we extend the frequency coverage of the Fourier reconstruction. Second, we implement a numerical reconstruction method based on the torus backprojection formula, together with its regularized variants.

\subsection{Closed geodesics on $\mathbb{T}^2$}

Each periodic geodesic $\gamma_{x,v}$ on $\mathbb{T}^2$ can be represented as a finite disjoint union of line segments within the fundamental domain $[0,1)^2$. Denoting the segments by $v_i, i=1,...,N$, the X-ray integral along the geodesic is the sum of the integrals along its segments. Figure \ref{fig:segments} illustrates an example with a geodesic corresponding to $v=(3,2)$. This decomposition allows the use of the standard Radon transform values on $\mathbb{R}^2$ to compute the torus X-ray transform.

\begin{figure}[h]
    \centering
    \includegraphics[width=0.4\linewidth]{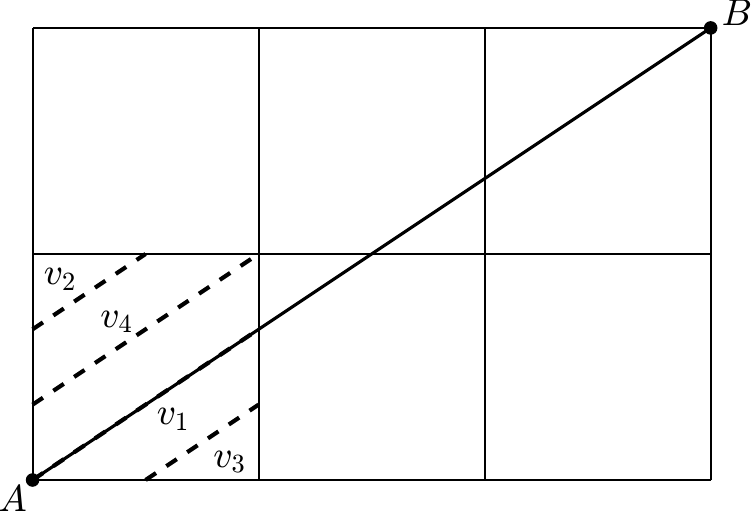}
    \caption{The closed geodesic on $\mathbb{T}^2$ determined by $v=(3,2)$, viewed in the fundamental domain $\left[ 0,1 \right)^2$ embedded in $\mathbb{R}^2$. The geodesic segment from $A$ to $B$ decomposes into four line segments $v_1,v_2,v_3$ and $v_4$ within $\left[ 0,1 \right)^2$. The torus X-ray transform along the geodesic is computed as a sum of the standard Radon transform values over these segments.}
    \label{fig:segments}
\end{figure}

\subsection{Discrete forward operator on the torus}
\label{subsec:forward}
Given a Radon transform data $Rf(v)$ for a set of discrete directions $v\in Q_N$, let the rays in direction $v$ be indexed by $k=1,\ldots, M$. Let $c_{k,v}$ denote the signed distance of the $k$'th ray from the origin and $Rf(v)_k$ the corresponding projection value. The geodesic is split into segments travelling from boundary to boundary of the fundamental domain $[0,1)^2$ and $d_{x,v,i}$ denotes the distance of the $i$'th segment of the geodesic from the origin. The torus forward operator is defined following \cite[Section 3.1.2]{IKR19} as
\begin{equation}
\label{eq:forwardop}
\mathcal{A}_{\mathbb{T}^2}f(x,v) = \frac{1}{|v|}\sum_{i=1}^N (w_{1,i}Rf(v)_{k_{1,i}} + w_{2,i}Rf(v)_{k_{2,i}}),
\end{equation}
where $k_{1,i}$ and $k_{2,i}$ are the indices of the two nearest rays to the segment $i$'th geodesic segment $v_i$, and $w_{1,i}, w_{2,i}$ are linear interpolation weights defined by
\begin{equation}
    w_{1,i} = \left|\frac{c_{k_{2,i},v}-d_{x,v,i}}{c_{k_{1,i},v}-c_{k_{2,i},v}}\right|, \quad w_{2,i}  = \left|\frac{c_{k_{1,i},v}-d_{x,v,i}}{c_{k_{1,i},v}-c_{k_{2,i},v}}\right|,
\end{equation}
when $|c_{k_{1,i},v} - d_{x,v,i}|+|c_{k_{2,i},v} - d_{x,v,i}| < |c_{k_{1,i},v} - c_{k_{2,i},v}|$ and $w_{1.i}=w_{2.i}=0$ otherwise. The zero-weight condition ensures that only rays on opposite sides of the geodesic segment contribute to the interpolation. 

The computation is repeated for each starting point $x\in X$, where $X$ denotes the set of geodesic starting points on the torus. This discrete operator converts an usual Radon transform data into an X-ray data on the torus, preserving the correspondence with Fourier coefficients needed for reconstruction.

\subsection{Discrete inverse model}
Given the torus projection data $g=\mathcal{A}_{\mathbb{T}^2}f$, the Fourier coefficients $\hat{f}(k)$ are recovered for frequencies $k \in \mathbb{Z}^2$ contained in a finite box $\mathbb{Z}_N:=[-N,N]^2 \cap \mathbb{Z}^2$, using the inversion formula of Theorem \ref{thm:inversion}. The regularized reconstruction is obtained by applying the adjoint operator $\mathcal{I}^*$ followed by the Tikhonov filter $P^{s}_\alpha$. In all numerical experiments, we take $r=0$. Then we have
\begin{equation}
    f^{\alpha,s}_{\mathrm{rec}}(x)=\sum_{k\in\mathbb{Z}_N} \frac{1}{1+\alpha\langle k\rangle^{2s}}\,\hat{g}(k,\hat{k}^\perp)\,e^{2\pi i k \cdot x},
\end{equation}
where $\hat{g}(k,\hat{k}^\perp) = \widehat{I^*g}(k)$ is the Fourier coefficient of the adjoint applied to data, as given by equation \eqref{eq:adjoint} and $\hat{k}^\perp$ is the unique vector in $Q_N$ perpendicular to $k$.

\subsection{Extended frequency coverage -- Star TCT}

The standard Torus CT method recovers Fourier coefficients $\hat{f}(k)$ for frequencies $k\in\mathbb{Z}_N:=[-N,N]^2\cap\mathbb{Z}^2$ using measurement directions $v\in Q_N$. By the Fourier slice theorem (Theorem \ref{thm:FourierSlice}), the projection data in direction $v\in\mathbb{Z}^2\setminus\{0\}$ determines $\hat{f}(k)$ for any $k\perp v$. In particular, if $v\perp k$ then $v\perp mk$ for any $m\in\mathbb{Z}$ so the same projection data simultaneusly determines $\hat{f}(mk)$ for all $m\geq1$. We define the extended frequency set
\begin{equation}
    \mathcal{K}_{N,\widetilde{N}}:=\{ mk:k\in\mathbb{Z}_N,m\in\mathbb{Z}\setminus\{0\}, \| mk\|_\infty \leq \widetilde{N}\}
\end{equation}
The set $\mathcal{K}_{N,\widetilde{N}}$ is the collection of frequencies recoverable from the data at projection angles $Q_N$ within the box $\mathbb{Z}_{\widetilde{N}}$. These coefficients are depicted in Figure \ref{fig:star_coefficients}.
\begin{figure}[h]
    \centering
    \includegraphics[width=0.45\linewidth]{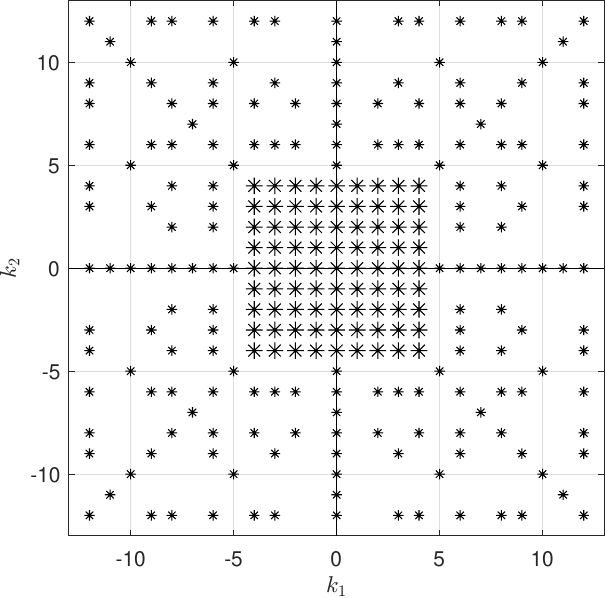}
    \caption{Coefficients in the extended frequency set $\mathcal{K}_{4, 12}$.}
    \label{fig:star_coefficients}
\end{figure}

The extended reconstruction is defined as the truncated Fourier series over $\mathcal{K}_{N,\widetilde{N}}$ with Tikhonov filter $P_\alpha^{s}$ as follows
\begin{equation}
     f^{\alpha,s}_{\mathrm{rec},*}(x)=\sum_{k\in\mathcal{K}_{N,\widetilde{N}}} \frac{1}{1+\alpha\langle k\rangle^{2s}}\,\hat{g}(k,\hat{k}^\perp)\,e^{2\pi i k \cdot x}.
\end{equation}
We refer to this variant as Star TCT.

\subsection{Discrete TBP}\label{subsec:directsum_numerical}

In the finite measurement setting, the sum in Theorem \ref{thm:TBP} is restricted to the discrete direction set $Q_N$. For each direction $v\in Q_N$, the torus X-ray data $\mathcal{I}_v(x_j,0)$ is computed at the $x$-axis starting points $x_j\in X$ using the forward operator $\mathcal{A}_{\mathbb{T}^2}$ as described in Section \ref{subsec:forward}.

The mean value term $\hat{f}(0)$ in \eqref{eq:f_N_TBP} is estimated from data using formula \eqref{eq:mean}. In the numerical implementation $\hat{f}(0)$ is replaced by $\bar{g}_v$, which is the empirical mean of the torus data in direction $v$. The final reconstruction is then 
\begin{equation}
    f_{\mathrm{TBP}}(x)=\sum_{v\in Q_N}(\mathcal{I}_vf(x) - \frac{|Q_N|-1}{|Q_N|}\bar{g}_v).
\end{equation}
This ensures that the mean value is included exactly once in the reconstruction. Both formulations are algebraically equivalent. However, in the presence of noise, using the direction-dependent estimate $\bar{g}_v$ avoids amplifying the estimation error, resulting in a more stable reconstruction. We have already proved in Proposition~\ref{prop:f_N_convergence} that the method has theoretical guarantees.

The value of $\mathcal{I}_v(x)$ at a reconstruction point $x=(x_1,x_2)$ is obtained by identifying which geodesic in direction $v$ passes through $x$. The starting point of such geodesic is given by 
\begin{equation}
    x_j = x_1-\frac{v_1}{v_2}x_2 \pmod{1},
\end{equation}
which follows from solving $x=(x_j,0)+tv$ for $x_j$. Since $x_j$ does not, in general, coincide with one of the discrete starting points in $X$, the value $\mathcal{I}_v(x_j,0)$ is obtained by linear interpolation of the data.

The filtered torus backprojection $f^{\alpha,s}_{\text{TBP}}=f_{\text{TBP}}*p_\alpha^s$ of Theorem~\ref{thm:filtered-TBP} is implemented numerically in two ways. In both cases, the filter is approximated by its truncated Fourier series over a finite box $\mathbb{Z}_M$,
\begin{equation}
    p_{\alpha,M}^{s}(x) = \sum_{k\in\mathbb{Z}_M} \frac{1}{1+\alpha\langle k\rangle^{2s}} e^{2\pi ik\cdot x}.
\end{equation}

\begin{enumerate}[label=(\roman*)]
\item \textbf{Fourier-domain implementation (fFTBP).} By the convolution theorem on $\mathbb{T}^2,$
\begin{equation}
    \widehat{f^{\alpha,s}_{\text{TBP}}}(k)=\frac{1}{1+\alpha\langle k\rangle^{2s}}\widehat{f_{\mathrm{TBP}}}(k).
\end{equation}
This is implemented by using the Fast Fourier transform (FFT) on $f_{\text{TBP}}$, multiplying pointwise by the filter coefficients, and applying the inverse FFT. 
    \item \textbf{Convolution implementation (cFTBP).} The convolution $f_{\text{TBP}}*p_\alpha^s$ is evaluated directly as
\begin{equation}
    f^{\alpha,s}_{\text{TBP}}(x) = \int_{\mathbb{T}^2}f_{\text{TBP}}(y)p_{\alpha,M}^s(x-y)\,dy, 
\end{equation}
approximated numerically over the reconstruction grid as a sum
\begin{equation}
    f_{\text{TBP}}^{\alpha,s}(x_{i_1,i_2})=\sum_{j_1,j_2} f_{\text{TBP}}(x_{j_1,j_2})p_{\alpha,M}^s(x_{i_1,i_2}-x_{j_1,j_2})h^2,
\end{equation}
where $h$ denotes the grid spacing and difference $x_{i_1,i_2}-x_{j_1,j_2}$ is taken modulo one. 

\end{enumerate}

\subsection{Numerical setup}

The implementation builds on the publicly available code \cite{KR19} accompanying \cite{IKR19}, and has been adapted and extended for this work to include parallel-beam-to-torus mapping, Star TCT, filtered TBP, and real-data experiments.

The results are compared with filtered backprojection (FBP) using (i) the same set of directions as in the torus reconstruction and (ii) the same number of uniformly distributed angles. The positivity constraint defined in Section \ref{subsec:positivity} is applied as a post-processing step to all methods to evaluate its effect on the reconstruction results.

The regularization parameters $\alpha$ and $s$ are optimized separately for both cases. Relative reconstruction errors
\begin{equation}
    \epsilon_p=\frac{\| f_{\mathrm{true}} - f_{\mathrm{rec}} \|_{L^p(\mathbb{T}^2)}}{\| f_{\mathrm{true}} \|_{L^p(\mathbb{T}^2)}}
\end{equation}
are evaluated in $L^p(\mathbb{T}^2)$ for $p\in \{1,2,\infty \}$ both with and without applying the positivity constraint.

\section{Experiments with simulated data}
\label{sec:simulated}
The numerical experiments are performed using the same $256\times256$ resolution phantoms as in \cite{IKR19}. The measurements corresponding to the forward operator \eqref{eq:forwardop} are simulated by using measurements $Rf(v)_k$ created by \texttt{radon.m} (MATLAB R2024b, MathWorks Inc.) with added zero-mean Gaussian noise with a relative level of $\sigma=0.02$. The FBP reference data are generated using a higher-resolution phantom and mapped to the reconstruction grid by interpolation, following the no-crime data generation approach of Mueller and Siltanen \cite{MS12}. All reconstructions are performed using a Fourier coefficient box $\mathbb{Z}_N$ with $N=50$. The corresponding set of torus measurement directions $Q_N$ contains unique $3069$ directions, and the set of geodesic starting points $X$ is chosen to consist of 256 equally spaced points.

The evaluated methods include Torus CT, the extended variant Star TCT with enlarged frequency set, torus backprojection (TBP) using both the unregularized and the two different regularization methods (cFTBP and fFTBP), and filtered backprojection, computed using either the torus-optimal directions $Q_N$ or the same number of uniformly distributed angles. Reconstruction errors for all evaluated methods without the positivity constraint are shown in Table \ref{tab:synthetic}, and the corresponding results with positivity constraint in Table \ref{tab:synthetic_pos_full}. All reconstruction figures are presented in Appendix \ref{app:reconstruction_figures}.

The regularization parameters $\alpha$ and $s$ are selected independently for each phantom and method by minimizing the $L^2$ reconstruction error over a two-dimensional grid. The error surfaces are shown in Appendix \ref{app:regularization} and the chosen parameters in Table \ref{tab:regparams}. For the TBP, both regularization implementations variants yield the same results, so they are reported only once.

\begin{table}[h]
\centering

\begin{tabular}{c | cc | cc | cc}
\hline
\multicolumn{7}{c}{Unconstrained} \\
\hline
 & \multicolumn{2}{c}{Shepp-Logan} 
 & \multicolumn{2}{c}{Flag} 
 & \multicolumn{2}{c}{Rot. Flag} \\
\hline
 & $\alpha$ & $s$ & $\alpha$ & $s$ & $\alpha$ & $s$ \\
\hline

Torus CT 
& $5.0\cdot10^{-6}$& $1.35$& $2.5\cdot10^{-5}$& $1.50$& $4.5\cdot10^{-5}$& $1.40$\\

Star TCT 
& $5.0\cdot10^{-6}$& $1.40$& $4.5\cdot10^{-5}$& $1.40$& $3.5\cdot10^{-5}$& 1.45\\

f/cFTBP 
& $5.0\cdot10^{-6}$& $1.40$& $2.5\cdot10^{-5}$& $1.50$& $3.5\cdot10^{-5}$& 1.45\\

\hline

\multicolumn{7}{c}{With positivity constraint $f_+ = \max(f,0)$} \\
\hline
 & \multicolumn{2}{c}{Shepp-Logan} 
 & \multicolumn{2}{c}{Flag} 
 & \multicolumn{2}{c}{Rot. Flag} \\
\hline
 & $\alpha$ & $s$ & $\alpha$ & $s$ & $\alpha$ & $s$ \\
\hline

Torus CT 
& $5.0\cdot10^{-6}$& $1.25$& $5.0\cdot10^{-6}$& $1.65$& $1.0\cdot10^{-5}$& $1.55$\\

Star TCT 
& $5.0\cdot10^{-6}$& $1.35$& $1.5\cdot10^{-5}$& $1.50$& $1.5\cdot10^{-5}$& 1.50\\

f/cFTBP 
& $5.0\cdot10^{-6}$& $1.35$& $5.0\cdot10^{-6}$& $1.65$& $1.5\cdot10^{-5}$& 1.50\\

\hline
\end{tabular}

\caption{Regularization parameters $\alpha$ and $s$ selected by minimizing the $L^2$ error for each phantom.}
\label{tab:regparams}
\end{table}













Figure \ref{fig:Torus CT_synthetic_N50} shows the TorusCT reconstructions using the updated codes. FBP reconstructions from both angle sets can be seen in Figure \ref{fig:fbpN50}

\begin{table}[h]
    \centering
    \setlength{\tabcolsep}{4pt}
    \resizebox{\textwidth}{!}{%
    \begin{tabular}{c c | c c c | c c c | c c c}
        & &\multicolumn{3}{c|}{Noiseless} & \multicolumn{3}{c|}{Noisy, no regularization} & \multicolumn{3}{c}{Noisy, regularized} \\
        \hline
         & & S-L & Flag & Rot.\ Flag & S-L & Flag & Rot.\ Flag & S-L & Flag & Rot.\ Flag \\
         \hline
          \multirow{3}{*}{Torus CT}  & $\epsilon_1$ & 18.8\%& 8.49\%& 7.52\%& 29.9\%& 27.0\%& 26.7\%& 27.9\%& 17.2\%& 17.5\%\\
           & $\epsilon_2$ & 26.1\%& 12.4\%& 11.6\%& 28.4\%& 19.4\%& 19.1\%& 28.1\%& 15.8\%& 15.7\%\\
           & $\epsilon_\infty$ & 76.5\%& 81.9\%& 70.8\%& 76.5\%& 71.2\%& 79.6\%& 73.8\%& 70.6\%& 69.4\%\\
          \hline
          \multirow{3}{*}{Star TCT}  & $\epsilon_1$ & 20.0\%& 8.01\%& 8.22\%& 35.6\%& 35.5\%& 35.6\%& 28.1\%& 17.5\%& 17.4\%\\
           & $\epsilon_2$ & 24.9\%& 11.4\%& 10.9\%& 29.5\%& 23.9\%& 24.0\%& 27.8\%& 15.5\%& 15.7\%\\
           & $\epsilon_\infty$ & 87.4\%& 89.3\%& 77.2\%& 90.8\%& 85.4\%& 90.2\%& 80.1\%& 71.2\%& 69.6\%\\
          \hline
          \multirow{3}{*}{TBP} & $\epsilon_1$ & 19.5\%& 7.34\%& 7.97\%& 37.6\%& 38.7\%& 37.8\%& 28.9\%& 18.4\%& 18.5\%\\
           &  $\epsilon_2$ & 24.9\%& 11.4\%& 10.9\%& 30.5\%& 25.9\%& 25.9\%& 28.3\%& 16.2\%& 16.2\%\\
           & $\epsilon_\infty$ & 84.6\%& 93.3\%& 78.0\%& 86.8\%& 92.2\%& 93.4\%& 79.6\%& 71.5\%& 69.1\%\\
          \hline
          \multirow{3}{*}{FBP with $Q_N$}&  $\epsilon_1$ & 26.2\%& 26.7\%& 19.0\%& 29.3\%& 29.9\%& 23.4\%& 
& & \\
          &  $\epsilon_2$ & 26.3\%& 19.3\%& 14.2\%& 27.0\%& 20.9\%& 16.4\%& & & \\
          &  $\epsilon_\infty$ & 75.1\%& 62.2\%& 75.6\%& 77.7\%& 66.2\%& 80.5\%& & & \\
          \hline
          \multirow{3}{*}{FBP uniform} & $\epsilon_1$ & 10.1\%& 2.72\%& 3.58\%& 16.9\%& 14.1\%& 14.7\%& & & \\
           & $\epsilon_2$ & 20.0\%& 6.12\%& 8.37\%& 20.9\%& 10.2\%& 11.7\%& & & \\
           & $\epsilon_\infty$ & 70.6\%& 39.2\%& 69.4\%& 69.9\%& 44.3\%& 74.1\%& & & \\
    \end{tabular}}
    \caption{Reconstruction errors for $N=50$ with noiseless and noisy data ($\sigma = 0.02$). In Star TCT, $\widetilde{N}=100$. All unregularized reconstructions use $\alpha=0, s=0$. Regularization parameters are selected independently for each phantom by minimizing the $L^2$ error. The regularization parameters used in each case are shown in Table \ref{tab:regparams}. For TBP, the regularized results show the same regularized value acquired by both implementations, cFTBP and fFTBP.}
    \label{tab:synthetic}
\end{table}

\begin{table}[h]
    \centering
    \setlength{\tabcolsep}{4pt}
    \resizebox{\textwidth}{!}{%
    \begin{tabular}{c  c | c c c | c c c | c c c}
        & &\multicolumn{3}{c|}{Noiseless} & \multicolumn{3}{c|}{Noisy, no regularization} & \multicolumn{3}{c}{Noisy, regularized} \\
        \hline
         &  & S-L & Flag & Rot.\ Flag & S-L & Flag & Rot.\ Flag & S-L & Flag & Rot.\ Flag \\
         \hline
         
          \multirow{3}{*}{Torus CT} & $\epsilon_1$ 
          & 16.3\%& 7.38\%& 6.44\%& 23.7\%& 19.2\%& 18.7\%& 23.0\%& 14.0\%& 13.8\%\\
          
          &  $\epsilon_2$ 
          & 25.7\%& 12.3\%& 11.5\%& 27.4\%& 17.3\%& 16.9\%& 27.4\%& 15.1\%& 14.9\%\\
          
          &  $\epsilon_\infty$ 
          & 76.5\%& 81.9\%& 70.9\%& 76.5\%& 71.3\%& 79.6\%& 75.1\%& 70.7\%& 69.6\%\\
          
          \hline
          
          \multirow{3}{*}{Star TCT} & $\epsilon_1$ 
          & 16.4\%& 6.12\%& 6.30\%& 27.2\%& 24.2\%& 24.2\%& 23.2\%& 13.9\%& 13.9\%\\
          
          &  $\epsilon_2$ 
          & 24.5\%& 11.0\%& 10.7\%& 27.9\%& 20.4\%& 20.4\%& 26.9\%& 14.8\%& 14.8\%\\
          
          &  $\epsilon_\infty$ 
          & 87.4\%& 89.3\%& 77.2\%& 90.8\%& 85.4\%& 90.2\%& 82.4\%& 71.4\%& 69.5\%\\
          
          \hline
          
          \multirow{3}{*}{TBP}  
          & $\epsilon_1$ 
          & 16.2\%& 5.73\%& 6.20\%& 28.8\%& 26.3\%& 26.3\%& 23.8\%& 14.6\%& 14.6\%\\

          & $\epsilon_2$ 
          & 24.6\%& 11.2\%& 10.7\%& 28.7\%& 21.9\%& 21.9\%& 27.4\%& 15.3\%& 15.3\%\\

          & $\epsilon_\infty$ 
          & 84.6\%& 93.3\%& 78.0\%& 86.8\%& 92.2\%& 93.4\%& 81.7\%& 72.0\%& 70.7\%\\

          \hline
          
          \multirow{3}{*}{FBP with $Q_N$} &  $\epsilon_1$ 
          & 19.7\% & 24.0\% & 13.7\%
          & 22.0\%& 25.8\% & 16.5\%
          & & & \\
          
          &  $\epsilon_2$ 
          & 24.6\% & 18.9\% & 12.4\%
          & 25.2\%& 20.2\% & 14.3\%
          & & & \\
          
          &  $\epsilon_\infty$ 
          & 75.1\% & 62.2\% & 75.6\%
          & 75.7\%& 66.2\%& 80.5\%
          & & & \\
          
          \hline
          
          \multirow{3}{*}{FBP uniform} & $\epsilon_1$ 
          & 8.74\% & 2.39\% & 3.16\%
          & 13.6\% & 9.64\% & 10.4\%
          & & & \\
          
          &  $\epsilon_2$ 
          & 19.9\% & 6.10\% & 8.35\%
          & 20.6\% & 8.95\% & 10.7\%
          & & & \\
          
          &  $\epsilon_\infty$ 
          & 70.6\% & 39.2\% & 69.4\%
          & 69.9\% & 44.1\% & 74.1\%
          & & & \\
          
    \end{tabular}}
    \caption{Reconstruction errors for $N=50$ with noiseless and noisy data ($\sigma = 0.02$) with positivity constraint $f_+ = \max(f,0)$. Regularization parameters are selected independently for each phantom by minimizing the $L^2$ error. The regularization parameters used in each case are shown in Table \ref{tab:regparams}. For TBP, the regularized results show the same regularized value acquired by both implementations, cFTBP and fFTBP.}
    \label{tab:synthetic_pos_full}
\end{table}

Star TCT reconstructions are computed with the same initial box size of $N=50$, while extending the reconstruction domain to $\widetilde{N}=100$. Due to the extended frequency coverage, the method yields slightly improved reconstruction accuracy compared to the standard Torus CT. The corresponding reconstructions are shown in Figure \ref{fig:StarTCT_synthetic_N50}.

TBP reconstructions are illustrated in Figure \ref{fig:TBP_synthetic_N50}. In the noiseless setting, TBP achieves the highest reconstruction accuracy among the torus-based methods. In the noisy data reconstructions,however, TBP obtains higher error norms than the other torus-based methods. Both regularization implementations yield the same error norm values. However, the Fourier-space implementation is much less computationally expensive.

Torus CT, Star TCT, and TBP result in smaller $L^2$ error norms than FBP with the torus-optimal angles $Q_N$ in the noiseless setting with the flag phantoms. Regularized Torus CT and Star TCT yield more accurate reconstructions than FBP with the same angle set. FBP also produces shadow-like artifacts when the projection directions are $Q_N$. These artifacts do not show up in torus-based methods. However, FBP with uniformly distributed angles yields the smallest errors among all evaluated methods.

Applying the positivity constraint $f_+=\max (f_\mathrm{rec},0)$ as a post-processing step further improves reconstruction quality for all methods, as reported in Table \ref{tab:synthetic_pos_full}.

\section{Experiments with real data}
\label{sec:realdata}

In this section, we apply the Torus CT, the Star TCT and the TBP methods to experimental X-ray tomography data. The dataset consists of fan-beam measurements acquired at equally spaced projection angles, whereas the forward model $\mathcal{A}_{\mathbb{T}^2}$ operates on data taken in the torus geodesic directions $Q_N$. We therefore convert the fan-beam sinogram to parallel-beam Radon data and approximate the torus geodesic directions by selecting the closest available projection angle from the sinogram, as described in Section \ref{subsec:fan2torus}. The FBP reconstruction computed from all available projection angles of the converted sinogram is used as a reference against which reconstruction errors are computed. All methods are also compared with FBP using the same torus-optimal angles $Q_N$, and the positivity constraint of Section \ref{subsec:positivity} is applied as a post-processing step.

\subsection{Dataset description}
The experimental data used in this study consist of tomographic X-ray measurements of a walnut from a publicly available dataset \cite{walnut,walnut_arxiv}. For our reconstruction experiments, we use the sinogram \texttt{sinogram1200}, a matrix of size $2296\times1200$ stored in \texttt{FullSizeSinograms.mat}. The data were acquired using a fan-beam setting, with the object rotated around its axis in increments of $0.3^\circ$, covering a full $360^\circ$ rotation. The initial measurement corresponds to a top-down orientation. For full details of the measurement geometry, we refer to \cite{walnut_arxiv}.

The raw sinogram is converted from 16-bit unsigned integers to double precision. Each projection direction is normalized by the mean intensity of the background region to account for beam fluctuations, and the negative logarithm is taken to obtain line integrals of the X-ray attenuation. Minor misalignments of the rotation center are corrected using a circular shift operation.

\subsection{Fan-beam to torus projection mapping}
\label{subsec:fan2torus}
The raw X-ray data are acquired in a fan-beam geometry. To use the existing Torus CT forward model $\mathcal{A}_{\mathbb{T}^2}$, the fan-beam sinogram is first converted to parallel-beam Radon data using the $\texttt{fan2para.m}$ transformation (MATLAB R2024b, MathWorks Inc.). For each Fourier coefficient variable $k$ on the torus, the closest perpendicular angle from the Radon sinogram is selected. The resulting sinogram serves as an input for the discrete forward operator $\mathcal{A}_{\mathbb{T}^2}$. The angular resolution of $0.3^\circ$ ensures that the approximation error from nearest-angle selection remains small.

The set of torus measurement directions $Q_N$ contains $3069$ directions for $N=50$, while the converted parallel-beam sinogram provides only $600$ unique projection angles. As a consequence, each sinogram projection direction is selected as the closest match for multiple torus directions. While this means that the reconstruction does not exploit the correct number of independent measurements, the reuse of projection data affects all torus-based methods equally, preserving the validity of the relative comparison between Torus CT, Star TCT, TBP, and FBP with $Q_N$.

\subsection{Reconstructions and comparison to FBP}

The reference reconstruction is the FBP solution from all $600$ projection angles of the converted sinogram, shown in Figure \ref{fig:FBPgroundtruth}. As both Torus CT and the reference reconstruction are computed from the parallel-beam sinogram obtained through the \texttt{fan2para.m} transformation, any errors introduced during this conversion affect all methods equally and do not influence the relative comparison. 

The reconstructions are calculated for $N \in \{25, 50, 75,100\}$ and the number of starting points of the geodesics is set to $512$. 

\begin{figure}[h]
    \centering
    \includegraphics[width=0.4\linewidth]{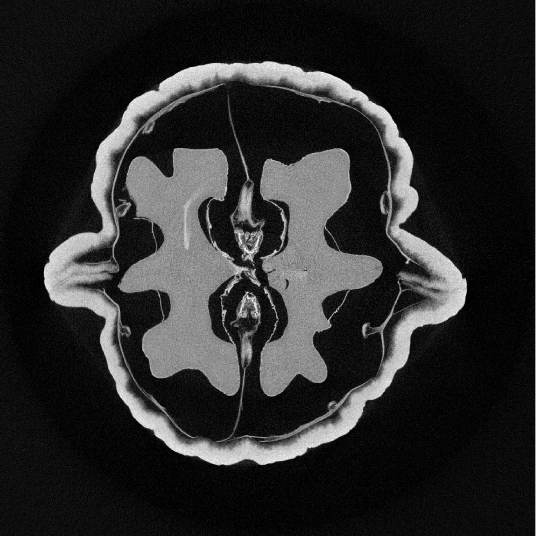}
    \caption{Reference reconstruction computed by FBP from all $600$ projection angles of the parallel-beam sinogram. This reconstruction is used as the reference in Table \ref{tab:walnut}. Figure is shown in $\left[ 0,10^{-3} \right]$ for printability.}
    \label{fig:FBPgroundtruth}
\end{figure}

Reconstruction errors for $N \in \{25, 50, 75,100\}$ are reported in Tables \ref{tab:walnut} and \ref{tab:walnut_pos} and illustrated in Figure \ref{fig:walnut_errors}. Reconstructions for $N=50$ and $N=100$ are shown in Figures \ref{fig:walnutN50} and \ref{fig:walnut100} respectively. For all methods, the reconstruction errors decrease as $N$ increases. Among the torus-based methods, TBP achieves the best accuracy with Star TCT approaching the same accuracy as $N$ and $\widetilde{N}$ increase. 

No regularization parameters were found that lower the $L^2$ error norms. This is most likely due to the interpolation errors and the reuse of measured data, which dominate over noise in this setting.

In comparison to FBP with torus-optimal angles $Q_N$, all torus-based methods exhibit higher $L^2$ errors. However, we note that the reference reconstruction is computed using FBP, which may introduce a systematic bias in the error metrics in its favor. The FBP reconstructions also exhibit the same shadow-like artifacts as the in simulated data. These artifacts are not present in the torus-based methods.

\begin{figure}[h]
    \centering

    \begin{subfigure}{0.32\textwidth}
        \centering
        \includegraphics[width=\linewidth]{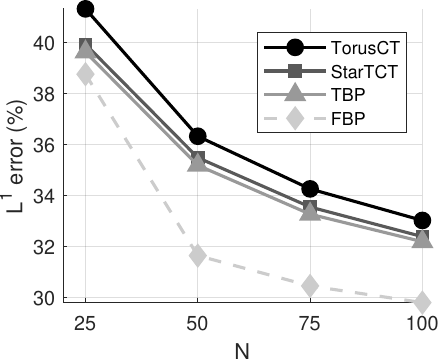}
        \caption{}
    \end{subfigure}
    \hfill
    \begin{subfigure}{0.32\textwidth}
        \centering
        \includegraphics[width=\linewidth]{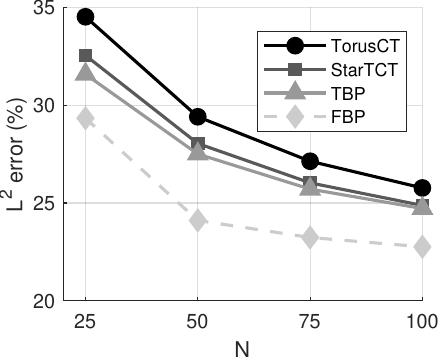}
        \caption{}
    \end{subfigure}
    \hfill
    \begin{subfigure}{0.32\textwidth}
        \centering
        \includegraphics[width=\linewidth]{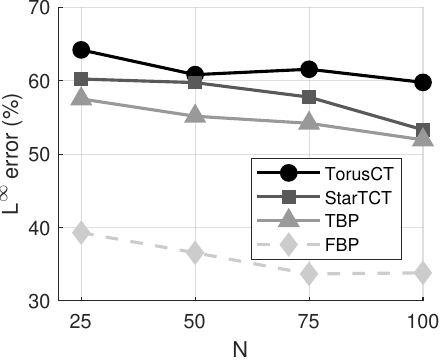}
        \caption{}
    \end{subfigure}

    \caption{Reconstruction errors $\epsilon_1$, $\epsilon_2$ and $\epsilon_\infty$ for Torus CT, Star TCT, TBP and FBP with torus-optimal angles $Q_N$ as functions of the Fourier coefficient box size $N$, computed from the walnut data set.}
    \label{fig:walnut_errors}
\end{figure}

\begin{table}[h]
    \centering
    \begin{tabular}{cc|c|c|c|c}
        & &Torus CT & Star TCT ($\widetilde{N}=2N$) & TBP & FBP from $Q_N$\\
        \hline
        \multirow{3}{*}{$N=25$}  
        & $\epsilon_1$       & 41.3\%& 39.9\%& 39.6\%& 38.8\%\\
        & $\epsilon_2$       & 34.5\%& 32.5\%& 31.6\%& 29.3\%\\
        & $\epsilon_\infty$  & 64.2\%& 60.2\%& 57.5\%& 39.3\%\\
        \hline
        \multirow{3}{*}{$N=50$}  
        & $\epsilon_1$       & 36.3\%& 35.5\%& 35.2\%& 31.6\%\\
        & $\epsilon_2$       & 29.4\%& 28.0\%& 27.5\%& 24.1\% \\
        & $\epsilon_\infty$  & 60.8\%& 59.7\%& 55.1\%& 36.6\% \\
        \hline
        \multirow{3}{*}{$N=75$}  
        & $\epsilon_1$       & 34.3\%& 33.5\%& 33.3\%& 30.5\%\\
        & $\epsilon_2$       & 27.1\%& 26.0\%& 25.7\%& 23.2\%\\
        & $\epsilon_\infty$  & 61.5\%& 57.8\%& 54.2\%& 33.7\%\\
        \hline
        \multirow{3}{*}{$N=100$} 
        & $\epsilon_1$       & 33.0\%& 32.4\%& 32.2\%& 29.8\%\\
        & $\epsilon_2$       & 25.8\%& 24.9\%& 24.7\%& 22.8\% \\
        & $\epsilon_\infty$  & 59.8\%& 53.3\%& 51.9\%& 33.8\% \\
    \end{tabular}
    \caption{Reconstruction errors for the walnut dataset. Errors are computed relative to the FBP reconstruction 
    from all $600$ projection angles of the parallel-beam sinogram.}
    \label{tab:walnut}
\end{table}

\begin{table}[h]
    \centering
    \begin{tabular}{cc|c|c|c|c}
        & & Torus CT & Star TCT ($\widetilde{N}=2N$) & TBP & FBP from $Q_N$ \\
        \hline
        \multirow{3}{*}{$N=25$}  
        & $\epsilon_1$       & 40.7\%& 38.3\%& 37.6\%& 35.7\%\\
        & $\epsilon_2$       & 34.2\%& 31.7\%& 30.4\%& 27.2\%\\
        & $\epsilon_\infty$  & 64.2\%& 60.2\%& 57.5\%& 35.9\%\\
        \hline
        \multirow{3}{*}{$N=50$}  
        & $\epsilon_1$       & 35.9\%& 34.3\%& 33.9\%& 30.6\%\\
        & $\epsilon_2$       & 29.2\%& 27.3\%& 26.7\%& 23.3\%\\
        & $\epsilon_\infty$  & 60.8\%& 59.7\%& 55.1\%& 31.6\%\\
        \hline
        \multirow{3}{*}{$N=75$}  
        & $\epsilon_1$       & 33.8\%& 32.6\%& 32.2\%& 29.7\%\\
        & $\epsilon_2$       & 26.9\%& 25.4\%& 25.1\%& 22.6\%\\
        & $\epsilon_\infty$  & 61.5\%& 57.8\%& 54.2\%& 31.4\%\\
        \hline
        \multirow{3}{*}{$N=100$} 
        & $\epsilon_1$       & 32.6\%& 31.5\%& 31.3\%& 29.2\%\\
        & $\epsilon_2$       & 25.5\%& 24.3\%& 24.2\%& 22.3\%\\
        & $\epsilon_\infty$  & 59.8\%& 53.3\%& 51.9\%& 32.3\% \\
    \end{tabular}
    \caption{Reconstruction errors for the walnut dataset with positivity constraint $f_+ = \max(f, 0)$ applied as a post-processing step. Errors are computed relative to the positivity-constrained FBP reconstruction from all $600$ projection angles of the parallel-beam sinogram.}
    \label{tab:walnut_pos}
\end{table}

\begin{figure}[h]
    \centering

    \begin{subfigure}{0.35\textwidth}
        \centering
        \includegraphics[width=\linewidth]{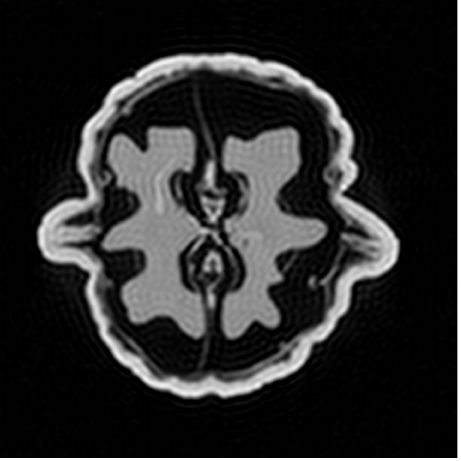}
        \caption{Torus CT}
    \end{subfigure}
    \hspace{0.10\textwidth}
    \begin{subfigure}{0.35\textwidth}
        \centering
        \includegraphics[width=\linewidth]{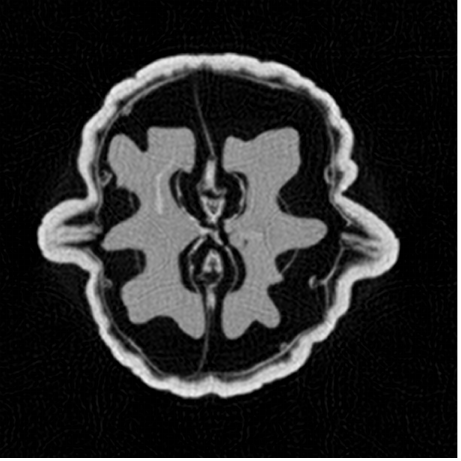}
        \caption{Star TCT}
    \end{subfigure}

    \begin{subfigure}{0.35\textwidth}
        \centering
        \includegraphics[width=\linewidth]{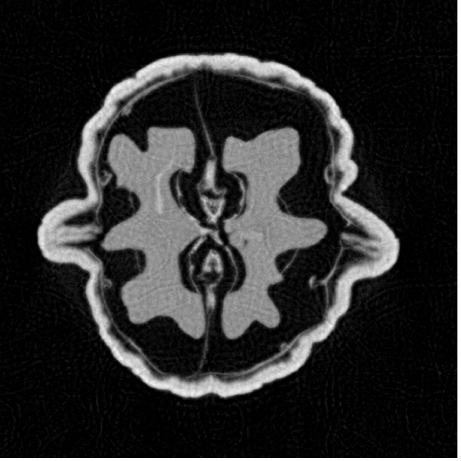}
        \caption{TBP}
    \end{subfigure}
    \hspace{0.10\textwidth}
    \begin{subfigure}{0.35\textwidth}
        \centering
        \includegraphics[width=\linewidth]{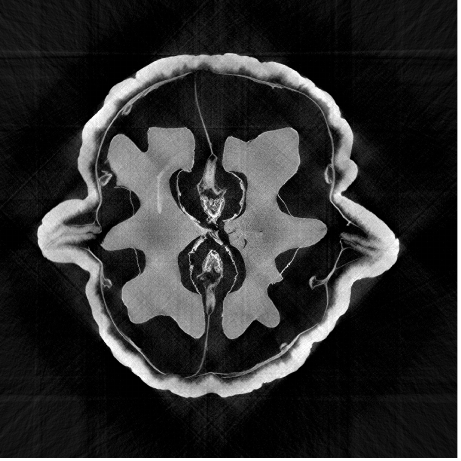}
        \caption{FBP with angles $Q_N$}
    \end{subfigure}

    \caption{Walnut reconstructions with $N=50$. Figures are shown in $\left[ 0,10^{-3} \right]$ for printability.}
    \label{fig:walnutN50}
\end{figure}

\begin{figure}[h]
    \centering

    \begin{subfigure}{0.35\textwidth}
        \centering
        \includegraphics[width=\linewidth]{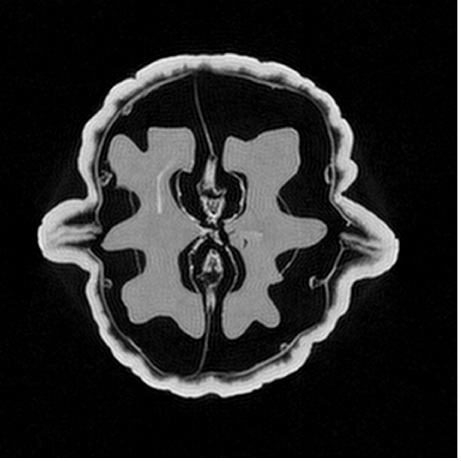}
        \caption{Torus CT}
    \end{subfigure}
    \hspace{0.10\textwidth}
    \begin{subfigure}{0.35\textwidth}
        \centering
        \includegraphics[width=\linewidth]{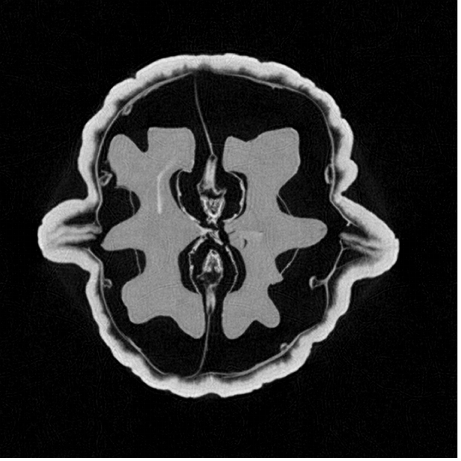}
        \caption{Star TCT}
    \end{subfigure}

    \begin{subfigure}{0.35\textwidth}
        \centering
        \includegraphics[width=\linewidth]{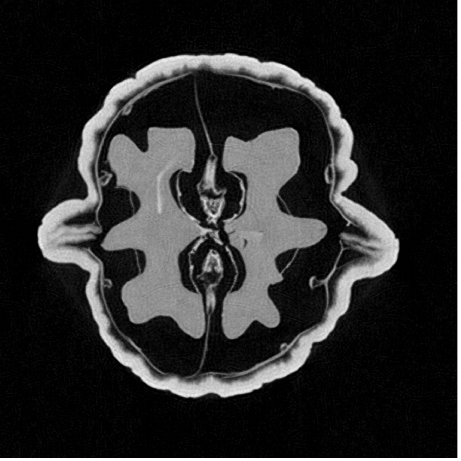}
        \caption{TBP}
    \end{subfigure}
    \hspace{0.10\textwidth}
    \begin{subfigure}{0.35\textwidth}
        \centering
        \includegraphics[width=\linewidth]{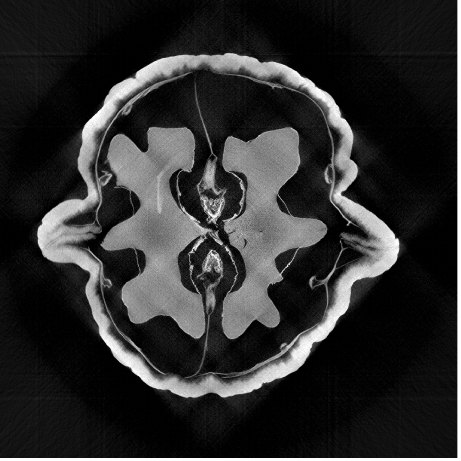}
        \caption{FBP with angles $Q_N$}
    \end{subfigure}

    \caption{Walnut reconstructions with $N=100$. Figures are shown in $\left[ 0,10^{-3} \right]$ for printability.}
    \label{fig:walnut100}
\end{figure}

The computing times of the torus based methods increase substantially with $N$ as shown in Figure \ref{fig:walnut_runtimes}. The dominant computational cost is the sinogram to torus mapping step, which increases from approximately $230$ s for $N=25$ to $13800$ s ($\approx 3.8$ h) for $N=100$. Once the mapping is complete, the reconstruction times vary substantially by method: TBP is the fastest at $18-28$ s, then Torus CT at $49-985$ s and Star TCT at $108-2814$ s. The computations were performed on a Dell OptiPlex SFF 7020 with an Intel Core i5-14500 processor and 32 GB of RAM running MATLAB R2024b. The implementation, however, has not been fully optimized for speed, and significant reductions in computation time could be achieved through, for example, simple parallelization.

\begin{figure}[h]
    \centering

    \begin{subfigure}{0.32\textwidth}
        \centering
        \includegraphics[width=\linewidth]{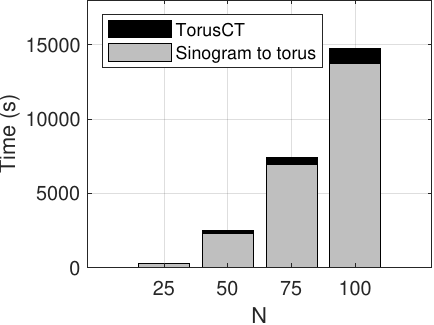}
        \caption{}
    \end{subfigure}
    \hfill
    \begin{subfigure}{0.32\textwidth}
        \centering
        \includegraphics[width=\linewidth]{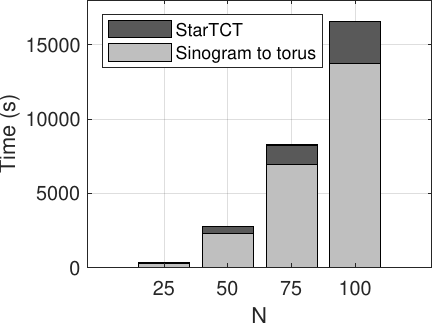}
        \caption{}
    \end{subfigure}
    \hfill
    \begin{subfigure}{0.32\textwidth}
        \centering
        \includegraphics[width=\linewidth]{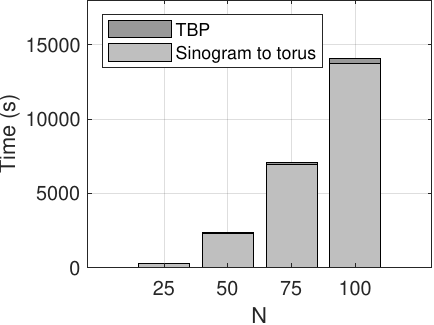}
        \caption{}
    \end{subfigure}

    \caption{Computing times in seconds for Torus CT (A), Star TCT (B), and TBP (C) as functions of $N$. Each bar shows the total time decomposed into the sinogram-to-torus mapping and the reconstruction step. The mapping step dominates the total computation and grows from approximately $230$ s  ($\approx 3.8$ min) for $N=25$ to $13800$ s ($\approx 3.8$ h) for $N=100$.  FBP is excluded as its computing time is negligible at this scale.}
    \label{fig:walnut_runtimes}
\end{figure}

\section{Conclusions}
\label{sec:conclusions}

In this study, we applied the Torus CT method proposed by \cite{IKR19} to experimental tomographic data and extended the numerical implementation to handle experimental fan-beam datasets by converting them from fan-beam to parallel-beam format and mapping angles to the geodesic directions on the torus. In addition, we introduced two extensions of the original method: the Star TCT reconstruction with extended frequency coverage, and the torus backprojection (TBP) approach based on the summation of the projection data. For TBP, we further developed its regularized formulation and showed that Tikhonov regularization corresponds to convolution with a spatial filtering kernel. Additionally, a pointwise positivity constraint was introduced as a new post-processing step. We also reran the simulated data experiments from \cite{IKR19} using the updated code.

For simulated data, the revised implementation yielded better results than those reported in \cite{IKR19}. All the torus-based methods result in a better reconstruction in the noiseless case than FBP computed from the same angles. Among the torus-based methods, Star TCT provides a consistent improvement over standard Torus CT, while TBP achieves the lowest errors in the noiseless setting but is more sensitive to noise. Both implementations of the regularized TBP produce identical error values, with the Fourier side filtering (fFTBP) being computationally more efficient than the numerical integral method (cFTBP). In the noisy regularized case, all torus-based methods resulted in lower $L^2$-errors than FBP with torus-optimal angles for the flag-type phantoms. The standard FBP with uniformly distributed angles, however, remains the most accurate overall.

The positivity constraint reduces reconstruction errors for all methods, with a more pronounced effect for Fourier-based reconstructions. This is consistent with the nature of the truncated Fourier series, which produces Gibbs-type oscillations that produce larger negative values than the artifacts typical of FBP, and are therefore more effectively suppressed by the positivity constraint. This effect is less pronounced in the regularized reconstructions. 

For real data, the reconstruction methods were applied to fan-beam X-ray measurements of a walnut \cite{walnut}. We note that, since the reference reconstruction is computed using FBP, the error metrics may give FBP with torus-optimal angles an advantage over the torus-based methods, regardless of reconstruction quality. Considering this, the reconstruction errors are comparable to those of FBP with the same torus-optimal angles $Q_N$. Applying the positivity constraint reduces the errors for both methods but does not substantially change the relative difference. 

The FBP reconstructions exhibit shadow-like streak artifacts along the diagonal, horizontal, and vertical directions due to the non-uniform angular distribution of $Q_N$ \cite{Q93, Q17}, whereas the Torus CT reconstructions are free of these artifacts. For small $N$, the Torus CT reconstructions exhibit oscillatory Gibbs-type artifacts characteristic of truncated Fourier series, which decrease as $N$ increases.

The computational cost of mapping the sinogram to the torus increases significantly with $N$ and is a computational bottleneck for the applicability of these methods. Once this step is completed, however, the reconstruction itself is relatively inexpensive. The meshless nature of the torus-based reconstructions allows evaluation on arbitrary grids without additional cost.

The fan-beam-to-parallel-beam conversion and the nearest-angle selection introduce approximation errors in the torus projection mapping, which are likely the dominant sources of error in the real-data experiments and are not addressed by the Tikhonov regularization. A more accurate treatment could involve interpolating directly in the fan-beam geometry or acquiring measurements at the geodesic angles $Q_N$ and thus avoiding the need for nearest-angle selection.

Future work includes testing Torus CT on a wider range of phantoms to better characterize the conditions under which it outperforms FBP. Experimentation with additional real datasets would also provide further evidence of the method's practical applicability. A natural direction is to acquire experimental data directly along the torus geodesic directions $Q_N$ with a dense detector to avoid the approximation error introduced by nearest-angle selection and decrease the error caused by interpolating on the torus. 

\subsection*{MATLAB codes and data} 

The MATLAB implementation accompanying this work is publicly available on GitHub:
\url{https://github.com/ella-salo/TorusCT}.

An archived version of the code for the simulated data experiments is available on Zenodo \cite{SKR26}.

The experimental walnut data set used in this work is publicly available at \cite{walnut}.

\subsection*{Acknowledgements} E.S. and J.R. were supported by the Research Council of Finland through the Flagship of Advanced Mathematics for Sensing, Imaging and Modelling (grant number 359183). A.M. was supported by Business Finland (decision no.
8132/31/2022), and by the Research Council of Finland through the Centre of Excellence in Inverse Modelling and Imaging (grant no. 353097). J.R. was additionally supported by the Emil Aaltonen Foundation, and the Jenny and Antti Wihuri Foundation.

\newpage
\appendix

\section{Simulated data: reconstruction results}
\label{app:reconstruction_figures}
This appendix presents the reconstruction results for simulated data in Figures \ref{fig:Torus CT_synthetic_N50}--\ref{fig:fbpN50}.

\begin{figure}[h]
    \centering

    \begin{subfigure}{0.25\textwidth}
        \centering
        \includegraphics[width=\linewidth]{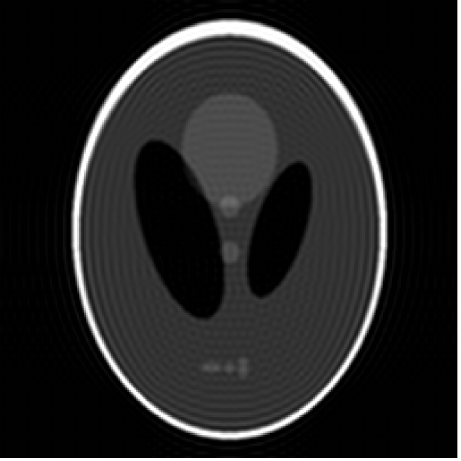}
        \caption{}
    \end{subfigure}
    \hspace{0.05\textwidth}
    \begin{subfigure}{0.25\textwidth}
        \centering
        \includegraphics[width=\linewidth]{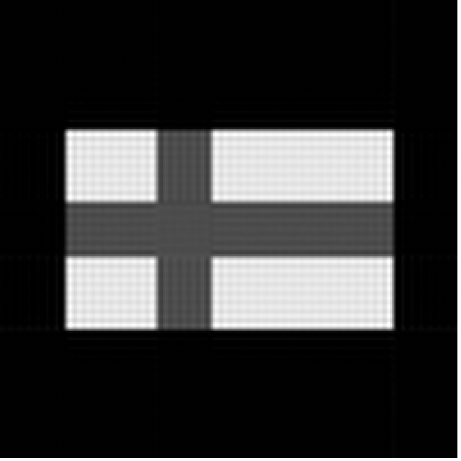}
        \caption{}
    \end{subfigure}
    \hspace{0.05\textwidth}
    \begin{subfigure}{0.25\textwidth}
        \centering
        \includegraphics[width=\linewidth]{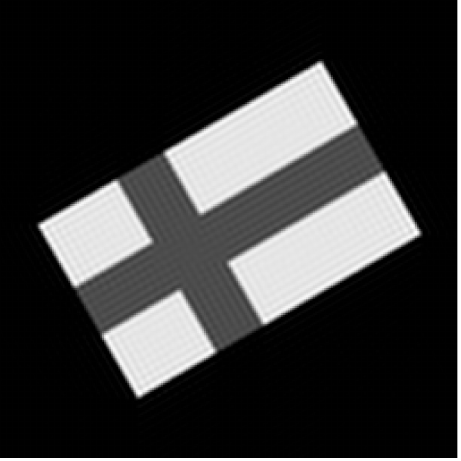}
        \caption{}
    \end{subfigure}

    \vspace{4mm}

    \begin{subfigure}{0.25\textwidth}
        \centering
        \includegraphics[width=\linewidth]{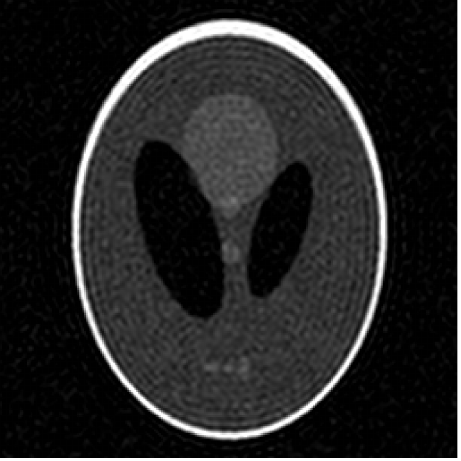}
        \caption{}
    \end{subfigure}
    \hspace{0.05\textwidth}
    \begin{subfigure}{0.25\textwidth}
        \centering
        \includegraphics[width=\linewidth]{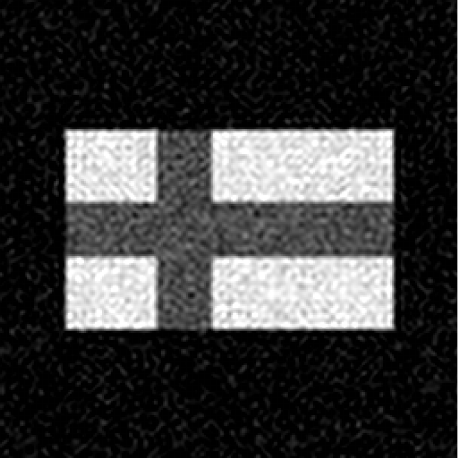}
        \caption{}
    \end{subfigure}
    \hspace{0.05\textwidth}
    \begin{subfigure}{0.25\textwidth}
        \centering
        \includegraphics[width=\linewidth]{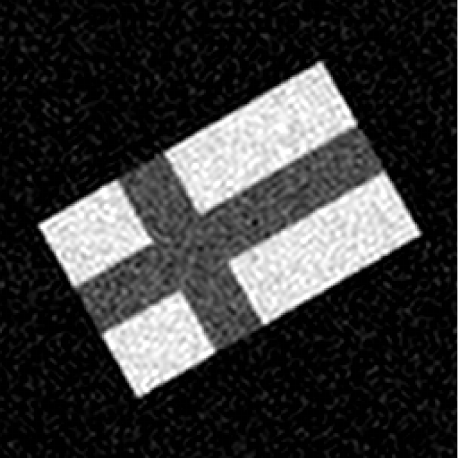}
        \caption{}
    \end{subfigure}

    \vspace{4mm}

    \begin{subfigure}{0.25\textwidth}
        \centering
        \includegraphics[width=\linewidth]{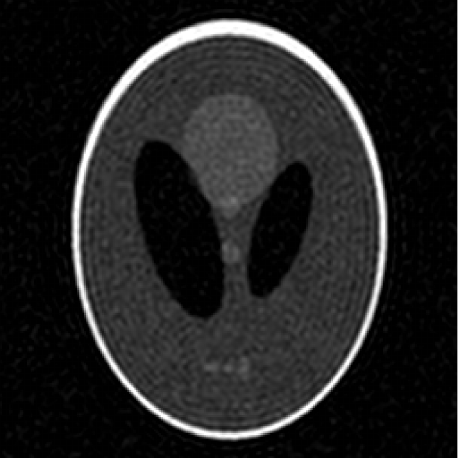}
        \caption{}
    \end{subfigure}
    \hspace{0.05\textwidth}
    \begin{subfigure}{0.25\textwidth}
        \centering
        \includegraphics[width=\linewidth]{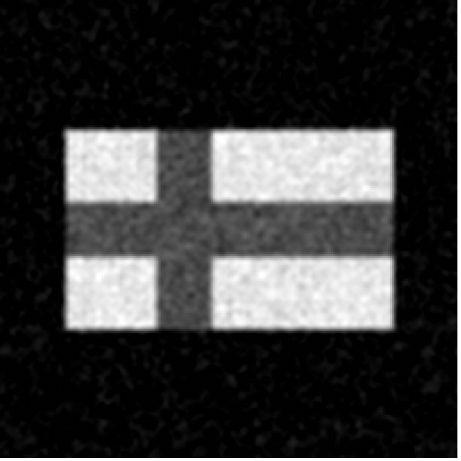}
        \caption{}
    \end{subfigure}
    \hspace{0.05\textwidth}
    \begin{subfigure}{0.25\textwidth}
        \centering
        \includegraphics[width=\linewidth]{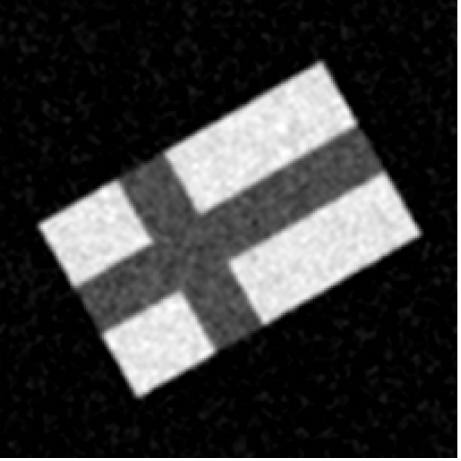}
        \caption{}
    \end{subfigure}

    \caption{Torus CT reconstructions for three phantoms (Shepp-Logan, flag, and flag rotated by $30^\circ$) with box size $N=50$. Top row (A)-(C): reconstructions from noise-free data. Middle row (D)-(F): reconstructions from noisy data. Bottom row (G)-(I): reconstructions from noisy data with regularization. Figures are shown in $\left[ 0,1 \right]$ for printability.}
    \label{fig:Torus CT_synthetic_N50}
\end{figure}

\begin{figure}[h]
    \centering

    \begin{subfigure}{0.25\textwidth}
        \centering
        \includegraphics[width=\linewidth]{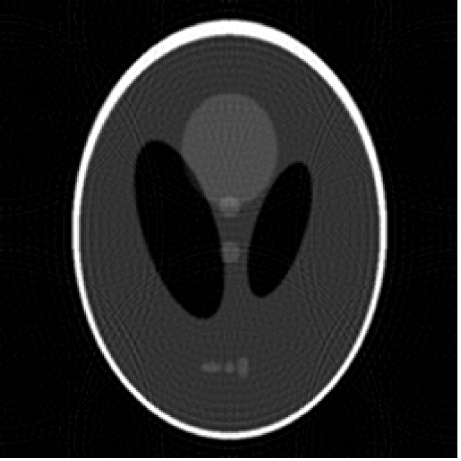}
        \caption{}
    \end{subfigure}
    \hspace{0.05\textwidth}
    \begin{subfigure}{0.25\textwidth}
        \centering
        \includegraphics[width=\linewidth]{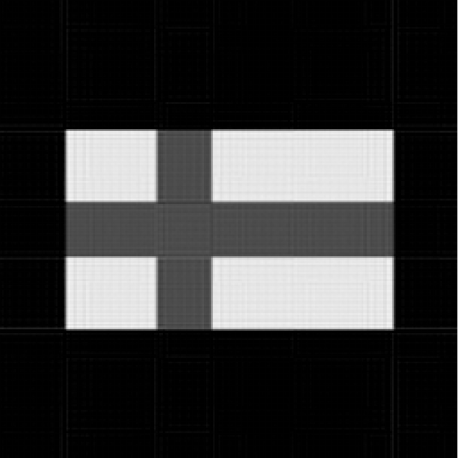}
        \caption{}
    \end{subfigure}
    \hspace{0.05\textwidth}
    \begin{subfigure}{0.25\textwidth}
        \centering
        \includegraphics[width=\linewidth]{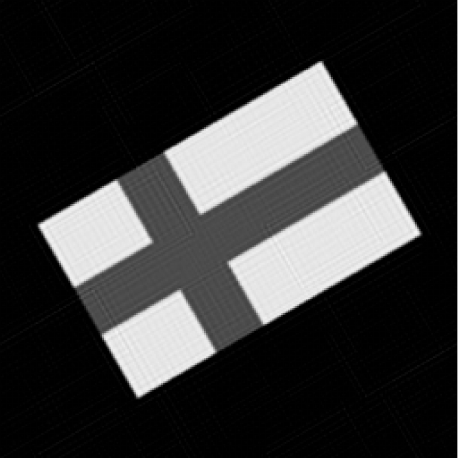}
        \caption{}
    \end{subfigure}

    \vspace{4mm}

    \begin{subfigure}{0.25\textwidth}
        \centering
        \includegraphics[width=\linewidth]{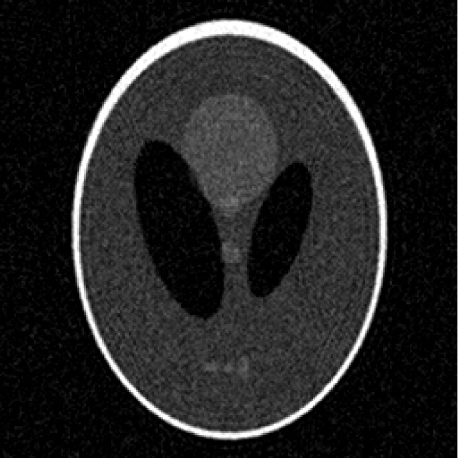}
        \caption{}
    \end{subfigure}
    \hspace{0.05\textwidth}
    \begin{subfigure}{0.25\textwidth}
        \centering
        \includegraphics[width=\linewidth]{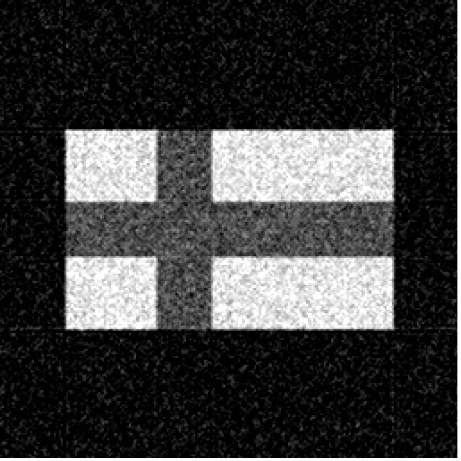}
        \caption{}
    \end{subfigure}
    \hspace{0.05\textwidth}
    \begin{subfigure}{0.25\textwidth}
        \centering
        \includegraphics[width=\linewidth]{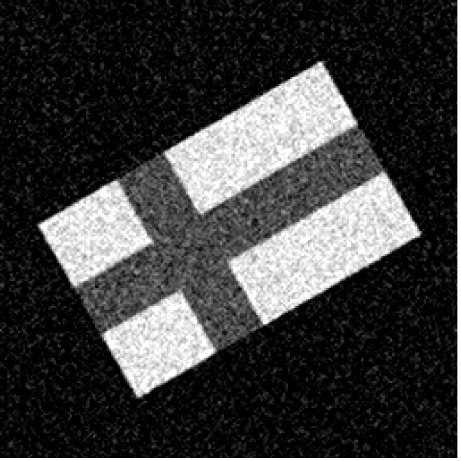}
        \caption{}
    \end{subfigure}

    \vspace{4mm}

    \begin{subfigure}{0.25\textwidth}
        \centering
        \includegraphics[width=\linewidth]{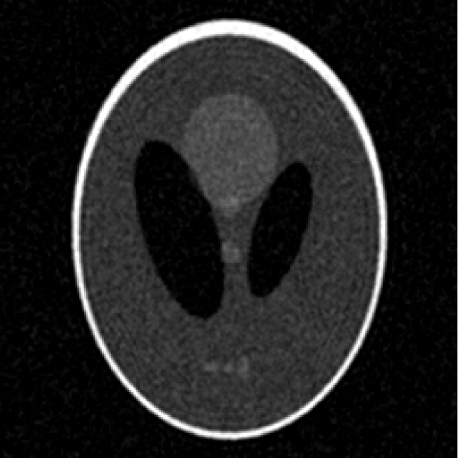}
        \caption{}
    \end{subfigure}
    \hspace{0.05\textwidth}
    \begin{subfigure}{0.25\textwidth}
        \centering
        \includegraphics[width=\linewidth]{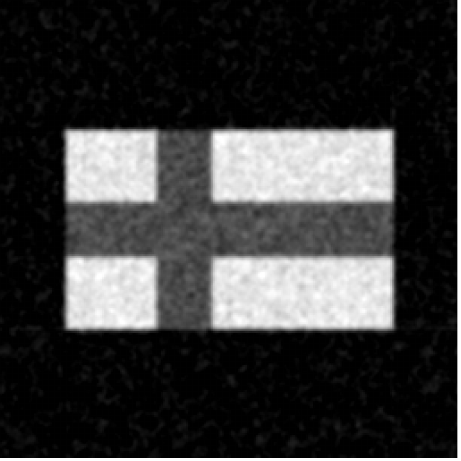}
        \caption{}
    \end{subfigure}
    \hspace{0.05\textwidth}
    \begin{subfigure}{0.25\textwidth}
        \centering
        \includegraphics[width=\linewidth]{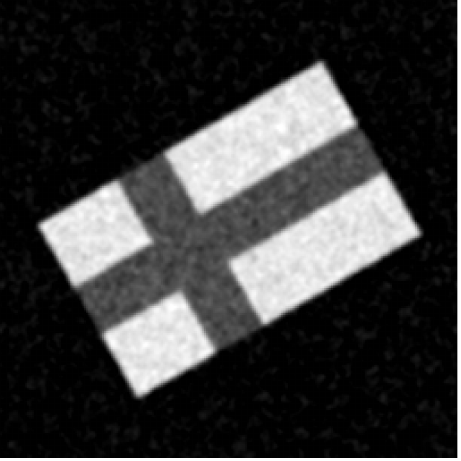}
        \caption{}
    \end{subfigure}

    \caption{Star TCT reconstructions for the three phantoms with $N=50$ and $\widetilde{N}=100$. Top row (A)-(C): reconstructions from noise-free data. Middle row (D)-(F): reconstructions from noisy data. Bottom row (G)-(I): reconstructions from noisy data with regularization. Figures are shown in $\left[ 0,1 \right]$ for printability.}
    \label{fig:StarTCT_synthetic_N50}
\end{figure}

\begin{figure}[h]
    \centering

    \begin{subfigure}{0.25\textwidth}
        \centering
        \includegraphics[width=\linewidth]{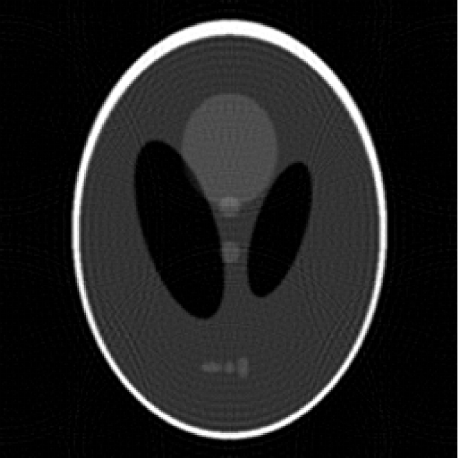}
        \caption{}
    \end{subfigure}
    \hspace{0.05\textwidth}
    \begin{subfigure}{0.25\textwidth}
        \centering
        \includegraphics[width=\linewidth]{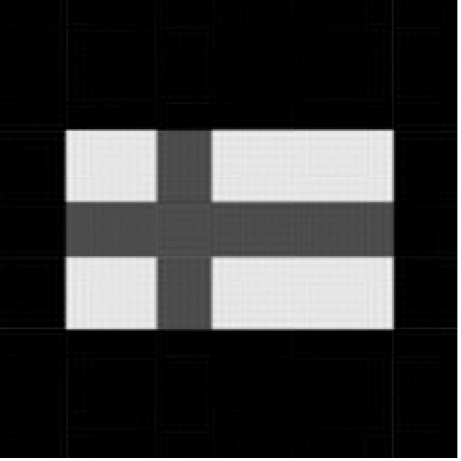}
        \caption{}
    \end{subfigure}
    \hspace{0.05\textwidth}
    \begin{subfigure}{0.25\textwidth}
        \centering
        \includegraphics[width=\linewidth]{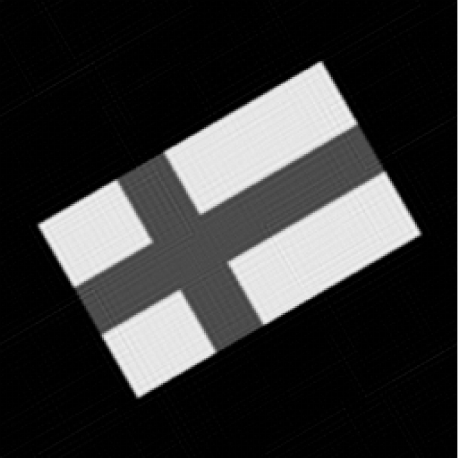}
        \caption{}
    \end{subfigure}

    \vspace{4mm}

    \begin{subfigure}{0.25\textwidth}
        \centering
        \includegraphics[width=\linewidth]{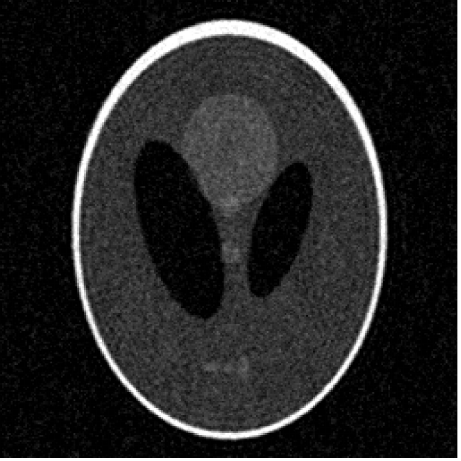}
        \caption{}
    \end{subfigure}
    \hspace{0.05\textwidth}
    \begin{subfigure}{0.25\textwidth}
        \centering
        \includegraphics[width=\linewidth]{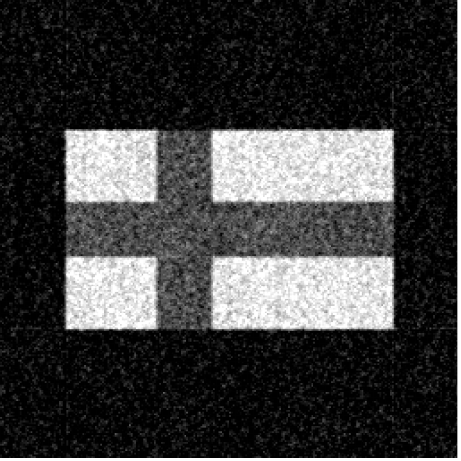}
        \caption{}
    \end{subfigure}
    \hspace{0.05\textwidth}
    \begin{subfigure}{0.25\textwidth}
        \centering
        \includegraphics[width=\linewidth]{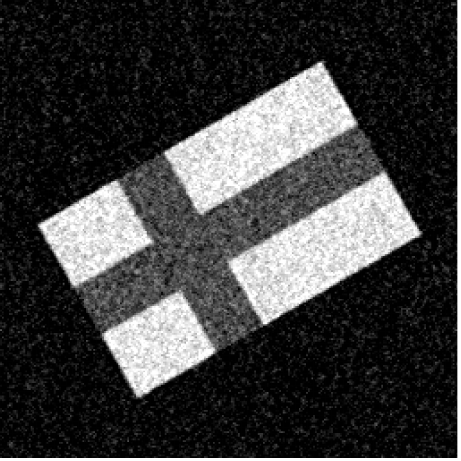}
        \caption{}
    \end{subfigure}

    \vspace{4mm}

        \begin{subfigure}{0.25\textwidth}
        \centering
        \includegraphics[width=\linewidth]{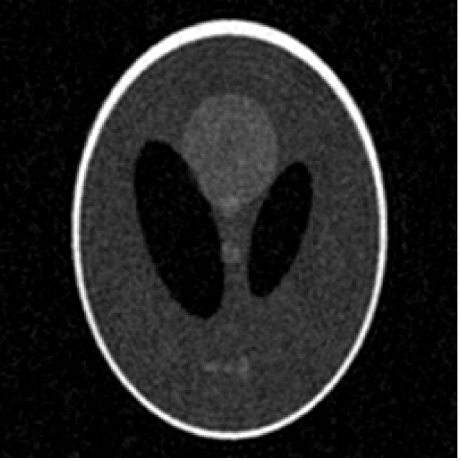}
        \caption{}
    \end{subfigure}
    \hspace{0.05\textwidth}
    \begin{subfigure}{0.25\textwidth}
        \centering
        \includegraphics[width=\linewidth]{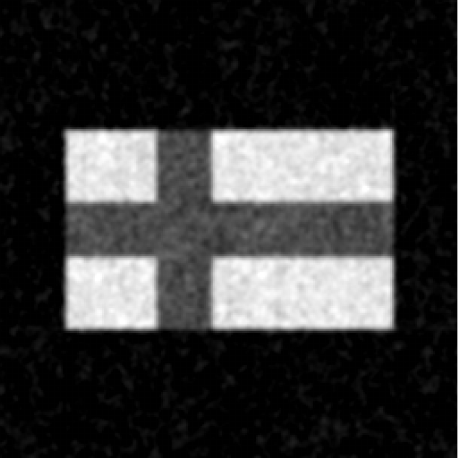}
        \caption{}
    \end{subfigure}
    \hspace{0.05\textwidth}
    \begin{subfigure}{0.25\textwidth}
        \centering
        \includegraphics[width=\linewidth]{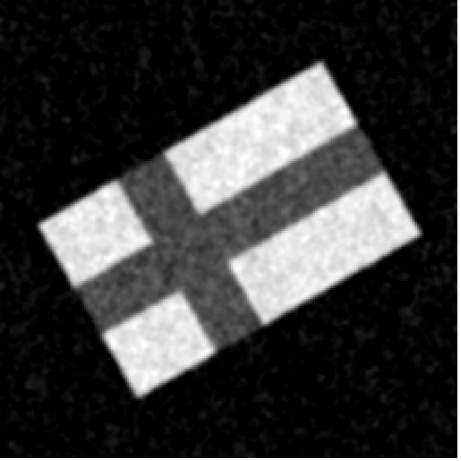}
        \caption{}
    \end{subfigure}

    \caption{TBP reconstructions for the three phantoms with $N=50$. Top row (A)-(C): reconstructions from noise-free data. Middle row (D)-(F): reconstructions from noisy data. Bottom row (G)-(I): reconstructions from noisy data with regularization using the Fourier-domain implementation (fFTBP). Figures are shown in $\left[ 0,1 \right]$ for printability.}
    \label{fig:TBP_synthetic_N50}
\end{figure}

\begin{figure}[h]
    \centering

    \begin{subfigure}{0.25\textwidth}
        \centering
        \includegraphics[width=\linewidth]{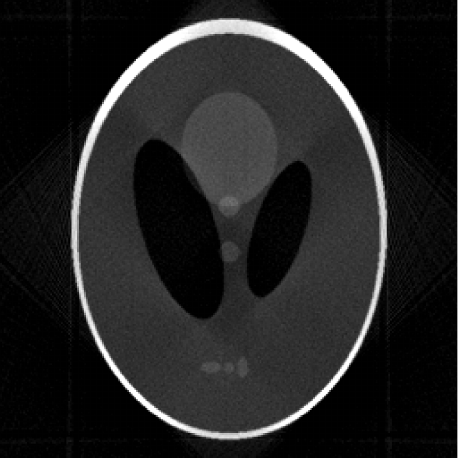}
        \caption{}
    \end{subfigure}
    \hspace{0.05\textwidth}
    \begin{subfigure}{0.25\textwidth}
        \centering
        \includegraphics[width=\linewidth]{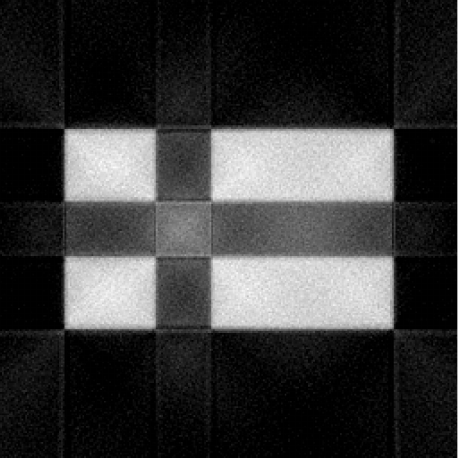}
        \caption{}
    \end{subfigure}
    \hspace{0.05\textwidth}
    \begin{subfigure}{0.25\textwidth}
        \centering
        \includegraphics[width=\linewidth]{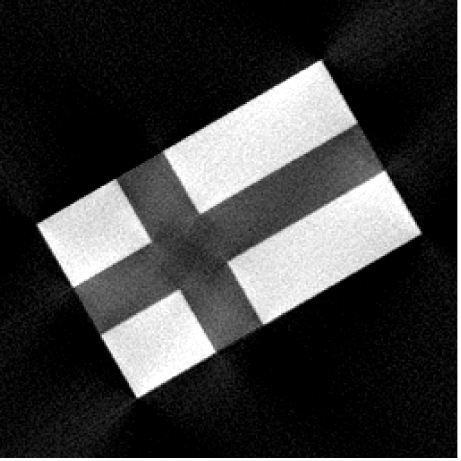}
        \caption{}
    \end{subfigure}

    \vspace{4mm}

    \begin{subfigure}{0.25\textwidth}
        \centering
        \includegraphics[width=\linewidth]{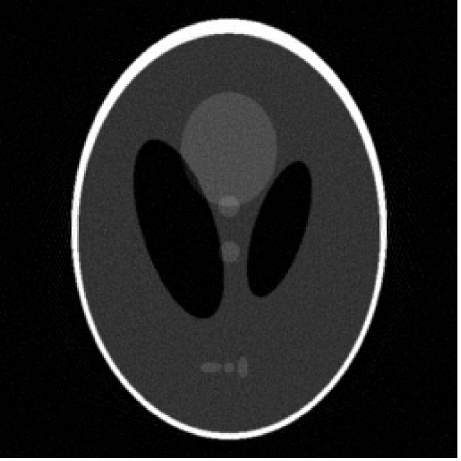}
        \caption{}
    \end{subfigure}
    \hspace{0.05\textwidth}
    \begin{subfigure}{0.25\textwidth}
        \centering
        \includegraphics[width=\linewidth]{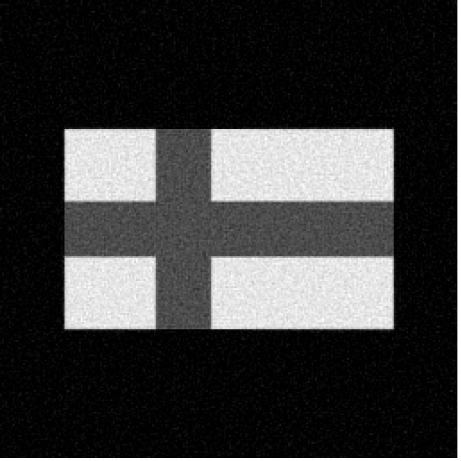}
        \caption{}
    \end{subfigure}
    \hspace{0.05\textwidth}
    \begin{subfigure}{0.25\textwidth}
        \centering
        \includegraphics[width=\linewidth]{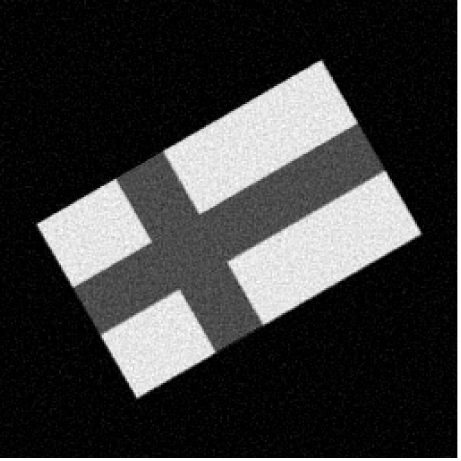}
        \caption{}
    \end{subfigure}

    \caption{Filtered backprojection reconstructions of noisy data. Top row (A)-(C): reconstructions from the set of directions $Q_N$. Bottom row (D)-(F): reconstructions from equally spaced angles. Figures are shown in $\left[ 0,1 \right]$ for printability.}
    \label{fig:fbpN50}
\end{figure}

\section{Regularization error surfaces}

The regularization parameters $\alpha$ and $s$ are selected by minimizing the reconstruction error over a two-dimensional grid for each phantom separately. The search is performed over $\alpha\in[0,10^{-4}]$ (21 uniformly spaced values) and $s\in[0,2]$ (41 uniformly spaced values). Figures \ref{fig:regularization_shepp}-- \ref{fig:cFTBP_regularization_flag_rot30} show the $L^1$, $L^2$ and $L^\infty$ errors as functions of $\alpha$ and $s$ for all three phantoms, without and with positivity constraint. The white cross indicates the minimizer of the corresponding error matrix.

\begin{figure}[h]
    \centering
    \begin{subfigure}{0.47\textwidth}
        \centering
        \includegraphics[width=\linewidth]{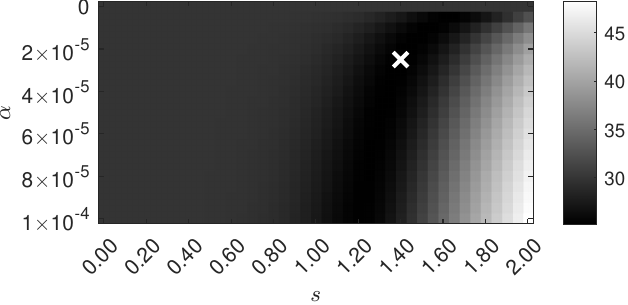}
        \caption{}
    \end{subfigure}
    \hfill
    \begin{subfigure}{0.47\textwidth}
        \centering
        \includegraphics[width=\linewidth]{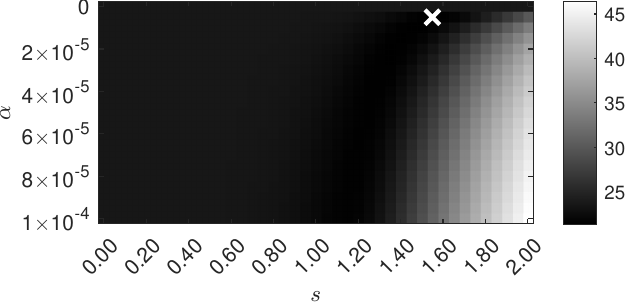}
        \caption{}
    \end{subfigure}

    \vspace{4mm}

    \begin{subfigure}{0.47\textwidth}
        \centering
        \includegraphics[width=\linewidth]{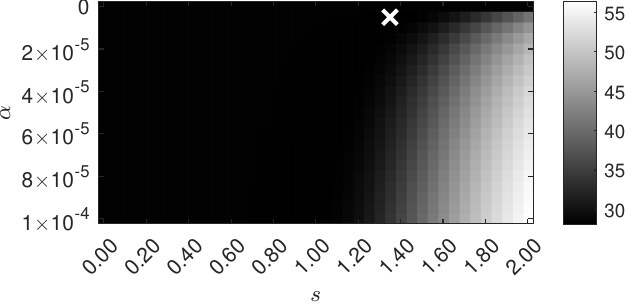}
        \caption{}
    \end{subfigure}
    \hfill
    \begin{subfigure}{0.47\textwidth}
        \centering
        \includegraphics[width=\linewidth]{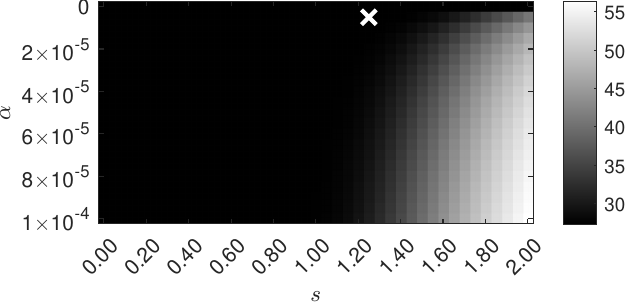}
        \caption{}
    \end{subfigure}

    \vspace{4mm}

    \begin{subfigure}{0.47\textwidth}
        \centering
        \includegraphics[width=\linewidth]{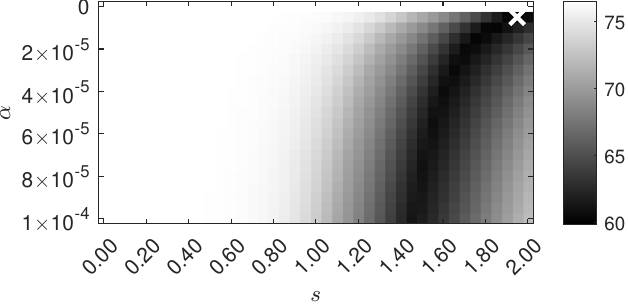}
        \caption{}
    \end{subfigure}
    \hfill
    \begin{subfigure}{0.47\textwidth}
        \centering
        \includegraphics[width=\linewidth]{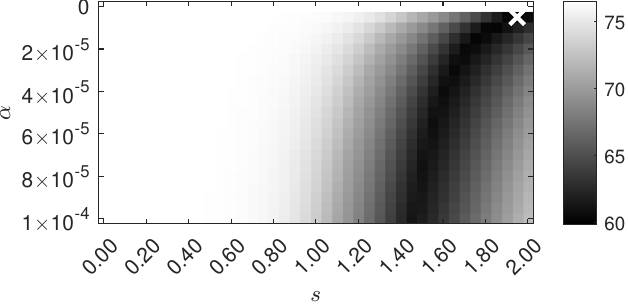}
        \caption{}
    \end{subfigure}

    \caption{Torus CT reconstruction errors as functions of the regularization parameters $\alpha$ and $s$ for the Shepp-Logan phantom, without (left column) and with (right column) the positivity constraint. Rows correspond to the $L^1$, $L^2$, and $L^\infty$ error metrics. }
    \label{fig:regularization_shepp}
\end{figure}

\begin{figure}[h]
    \centering
    \begin{subfigure}{0.47\textwidth}
        \centering
        \includegraphics[width=\linewidth]{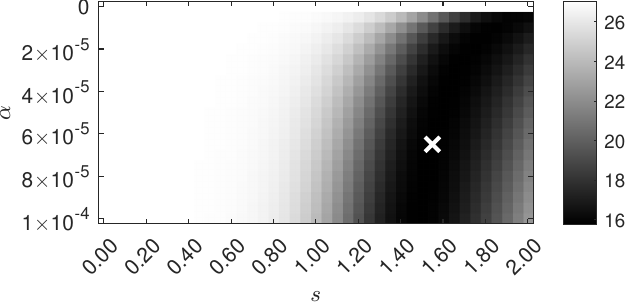}
        \caption{}
    \end{subfigure}
    \hfill
    \begin{subfigure}{0.47\textwidth}
        \centering
        \includegraphics[width=\linewidth]{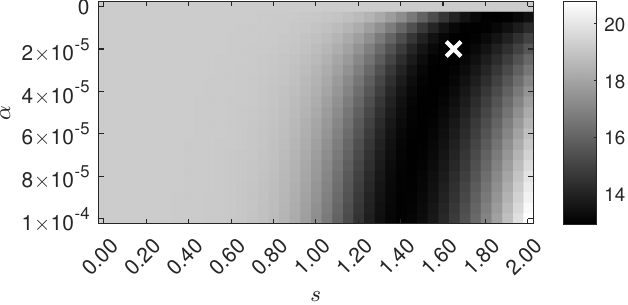}
        \caption{}
    \end{subfigure}

    \vspace{4mm}

    \begin{subfigure}{0.47\textwidth}
        \centering
        \includegraphics[width=\linewidth]{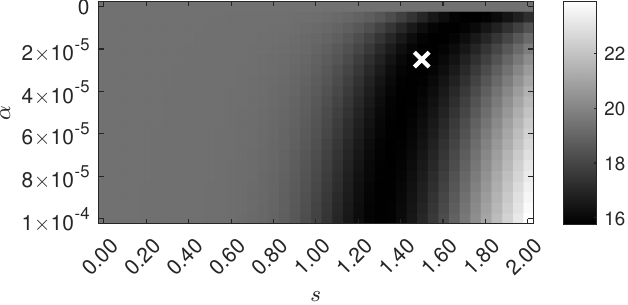}
        \caption{}
    \end{subfigure}
    \hfill
    \begin{subfigure}{0.47\textwidth}
        \centering
        \includegraphics[width=\linewidth]{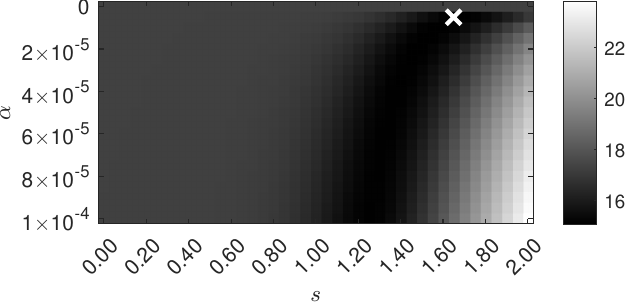}
        \caption{}
    \end{subfigure}

    \vspace{4mm}

    \begin{subfigure}{0.47\textwidth}
        \centering
        \includegraphics[width=\linewidth]{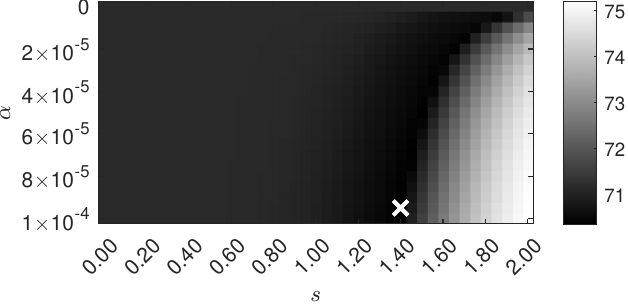}
        \caption{}
    \end{subfigure}
    \hfill
    \begin{subfigure}{0.47\textwidth}
        \centering
        \includegraphics[width=\linewidth]{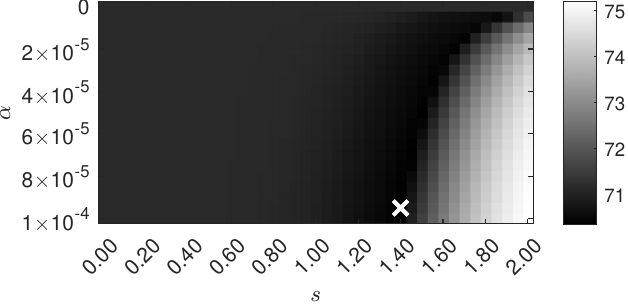}
        \caption{}
    \end{subfigure}

    \caption{Torus CT reconstruction errors as functions of the regularization parameters $\alpha$ and $s$ for the flag phantom, without (left column) and with (right column) the positivity constraint. Rows correspond to the $L^1$, $L^2$, and $L^\infty$ error metrics. }
    \label{fig:regularization_flag}
\end{figure}

\begin{figure}[h]
    \centering
    \begin{subfigure}{0.47\textwidth}
        \centering
        \includegraphics[width=\linewidth]{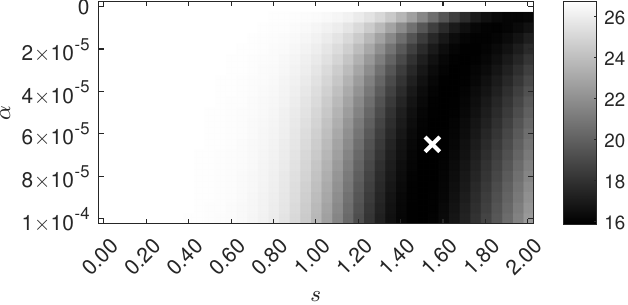}
        \caption{}
    \end{subfigure}
    \hfill
    \begin{subfigure}{0.47\textwidth}
        \centering
        \includegraphics[width=\linewidth]{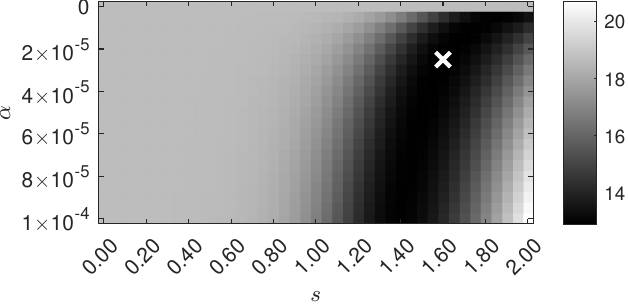}
        \caption{}
    \end{subfigure}

    \vspace{4mm}

    \begin{subfigure}{0.47\textwidth}
        \centering
        \includegraphics[width=\linewidth]{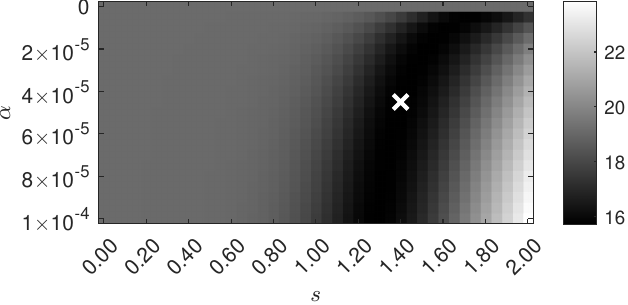}
        \caption{}
    \end{subfigure}
    \hfill
    \begin{subfigure}{0.47\textwidth}
        \centering
        \includegraphics[width=\linewidth]{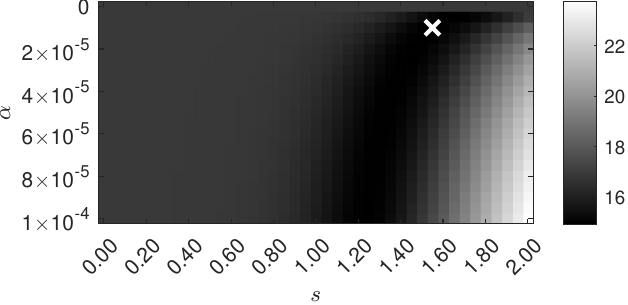}
        \caption{}
    \end{subfigure}

    \vspace{4mm}

    \begin{subfigure}{0.47\textwidth}
        \centering
        \includegraphics[width=\linewidth]{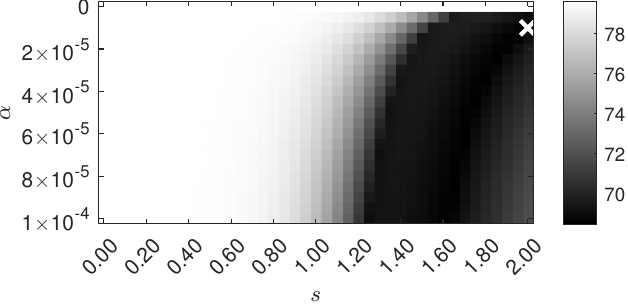}
        \caption{}
    \end{subfigure}
    \hfill
    \begin{subfigure}{0.47\textwidth}
        \centering
        \includegraphics[width=\linewidth]{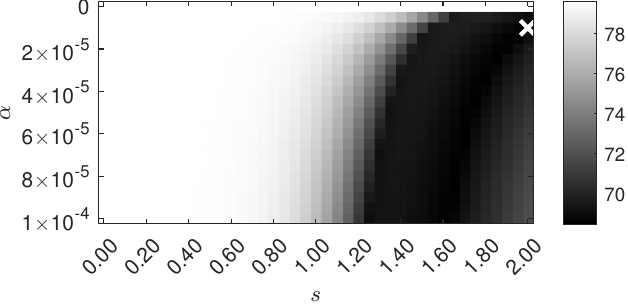}
        \caption{}
    \end{subfigure}

    \caption{Torus CT reconstruction errors as functions of the regularization 
    parameters $\alpha$ and $s$ for the rotated flag phantom, without (left 
    column) and with (right column) the positivity constraint. Rows 
    correspond to the $L^1$, $L^2$, and $L^\infty$ error metrics.}
    \label{fig:regularization_flag_rot30}
\end{figure}

\begin{figure}[h]
    \centering
    \begin{subfigure}{0.47\textwidth}
        \centering
        \includegraphics[width=\linewidth]{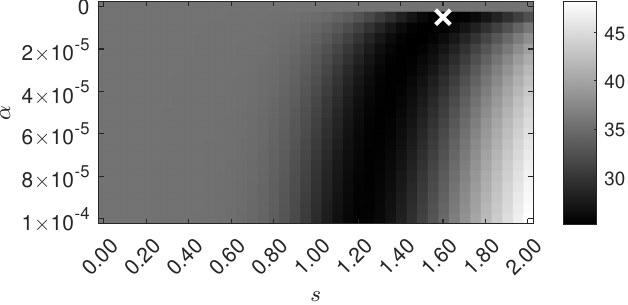}
        \caption{}
    \end{subfigure}
    \hfill
    \begin{subfigure}{0.47\textwidth}
        \centering
        \includegraphics[width=\linewidth]{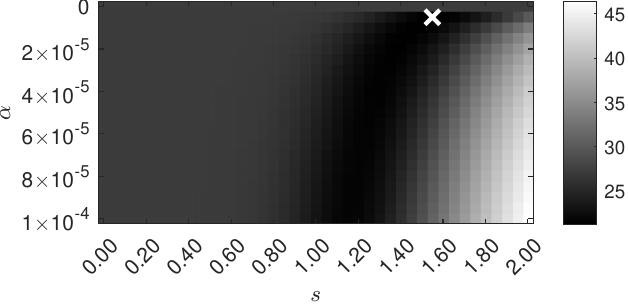}
        \caption{}
    \end{subfigure}

    \vspace{4mm}

    \begin{subfigure}{0.47\textwidth}
        \centering
        \includegraphics[width=\linewidth]{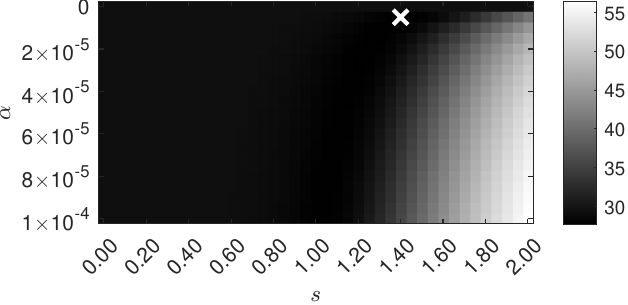}
        \caption{}
    \end{subfigure}
    \hfill
    \begin{subfigure}{0.47\textwidth}
        \centering
        \includegraphics[width=\linewidth]{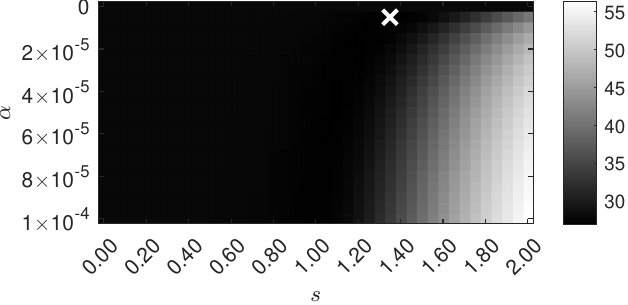}
        \caption{}
    \end{subfigure}

    \vspace{4mm}

    \begin{subfigure}{0.47\textwidth}
        \centering
        \includegraphics[width=\linewidth]{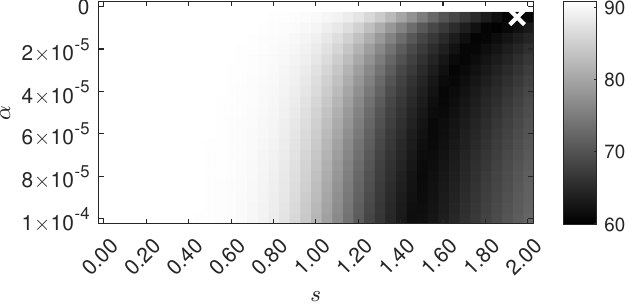}
        \caption{}
    \end{subfigure}
    \hfill
    \begin{subfigure}{0.47\textwidth}
        \centering
        \includegraphics[width=\linewidth]{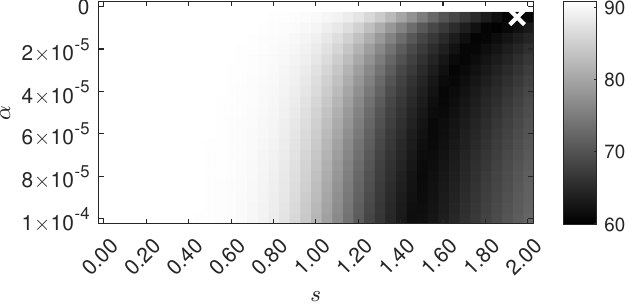}
        \caption{}
    \end{subfigure}

    \caption{Star TCT reconstruction errors as functions of the regularization parameters $\alpha$ and $s$ for the Shepp-Logan phantom, without (left column) and with (right column) the positivity constraint. Rows correspond to the $L^1$, $L^2$, and $L^\infty$ error metrics.}
    \label{fig:starTCT_regularization_shepp}
\end{figure}

\begin{figure}[h]
    \centering
    \begin{subfigure}{0.47\textwidth}
        \centering
        \includegraphics[width=\linewidth]{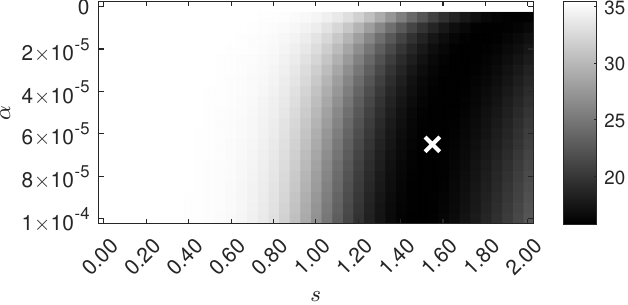}
        \caption{}
    \end{subfigure}
    \hfill
    \begin{subfigure}{0.47\textwidth}
        \centering
        \includegraphics[width=\linewidth]{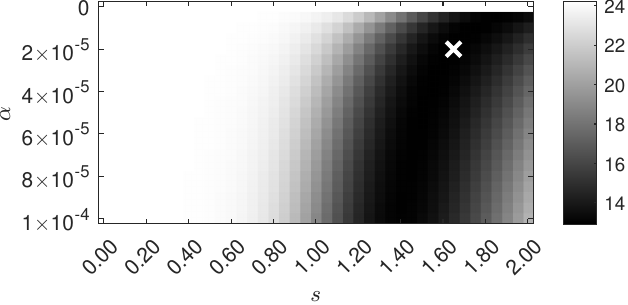}
        \caption{}
    \end{subfigure}

    \vspace{4mm}

    \begin{subfigure}{0.47\textwidth}
        \centering
        \includegraphics[width=\linewidth]{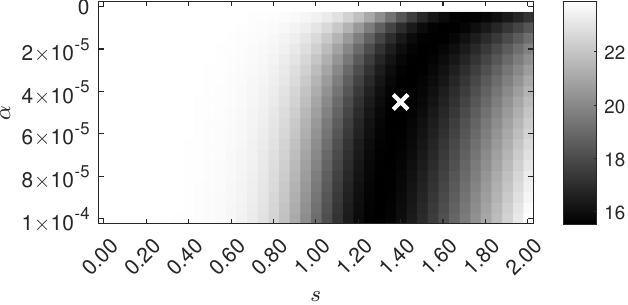}
        \caption{}
    \end{subfigure}
    \hfill
    \begin{subfigure}{0.47\textwidth}
        \centering
        \includegraphics[width=\linewidth]{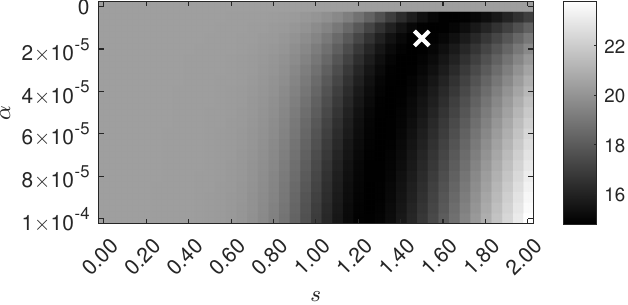}
        \caption{}
    \end{subfigure}

    \vspace{4mm}

    \begin{subfigure}{0.47\textwidth}
        \centering
        \includegraphics[width=\linewidth]{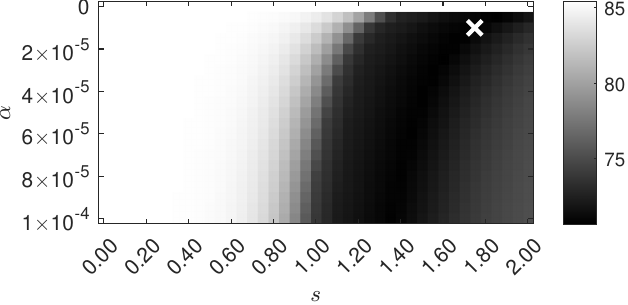}
        \caption{}
    \end{subfigure}
    \hfill
    \begin{subfigure}{0.47\textwidth}
        \centering
        \includegraphics[width=\linewidth]{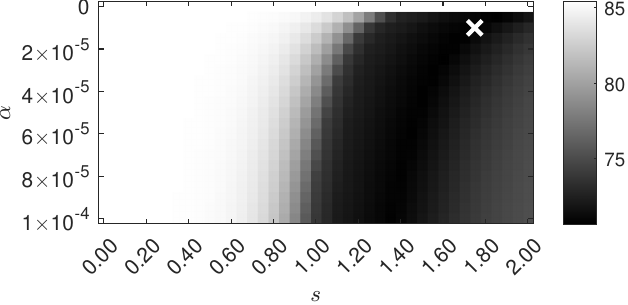}
        \caption{}
    \end{subfigure}

    \caption{Star TCT reconstruction errors as functions of the regularization parameters $\alpha$ and $s$ for the flag phantom, without (left column) and with (right column) the positivity constraint. Rows correspond to the $L^1$, $L^2$, and $L^\infty$ error metrics.}
    \label{fig:starTCT_regularization_flag}
\end{figure}

\begin{figure}[h]
    \centering
    \begin{subfigure}{0.47\textwidth}
        \centering
        \includegraphics[width=\linewidth]{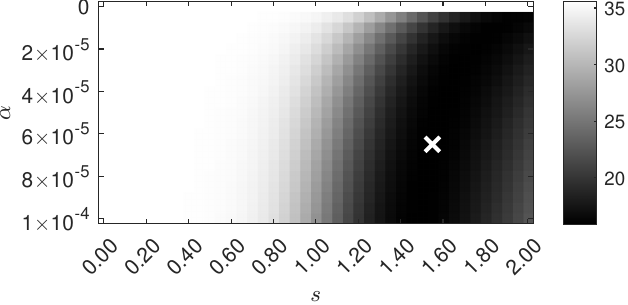}
        \caption{}
    \end{subfigure}
    \hfill
    \begin{subfigure}{0.47\textwidth}
        \centering
        \includegraphics[width=\linewidth]{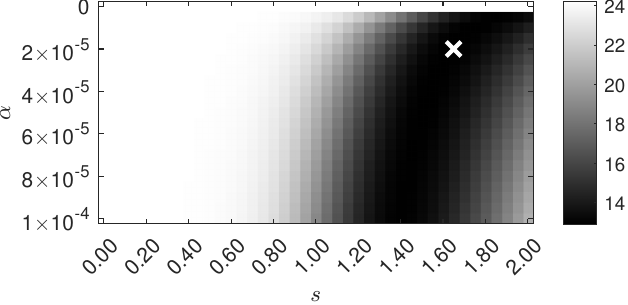}
        \caption{}
    \end{subfigure}

    \vspace{4mm}

    \begin{subfigure}{0.47\textwidth}
        \centering
        \includegraphics[width=\linewidth]{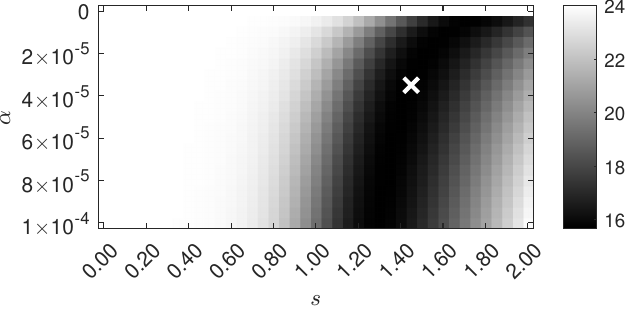}
        \caption{}
    \end{subfigure}
    \hfill
    \begin{subfigure}{0.47\textwidth}
        \centering
        \includegraphics[width=\linewidth]{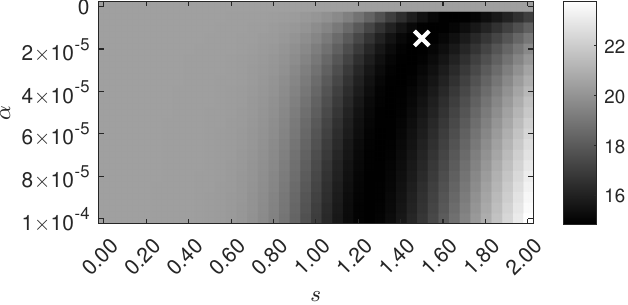}
        \caption{}
    \end{subfigure}

    \vspace{4mm}

    \begin{subfigure}{0.47\textwidth}
        \centering
        \includegraphics[width=\linewidth]{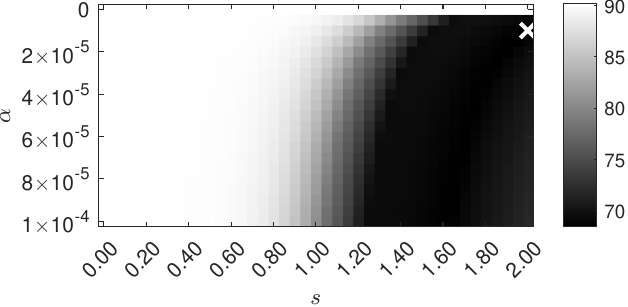}
        \caption{}
    \end{subfigure}
    \hfill
    \begin{subfigure}{0.47\textwidth}
        \centering
        \includegraphics[width=\linewidth]{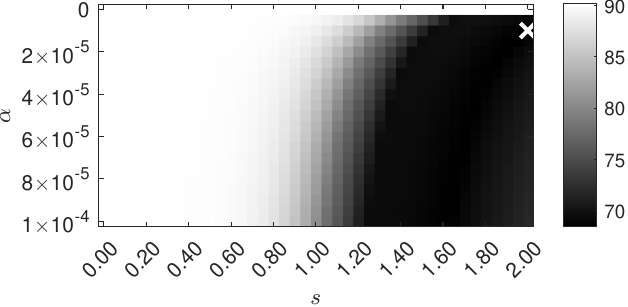}
        \caption{}
    \end{subfigure}

    \caption{Star TCT reconstruction errors as functions of the regularization 
    parameters $\alpha$ and $s$ for the rotated flag phantom, without (left 
    column) and with (right column) the positivity constraint. Rows 
    correspond to the $L^1$, $L^2$, and $L^\infty$ error metrics. }
    \label{fig:starTCT_regularization_flag_rot30}
\end{figure}

\begin{figure}[h]
    \centering
    \begin{subfigure}{0.47\textwidth}
        \centering
        \includegraphics[width=\linewidth]{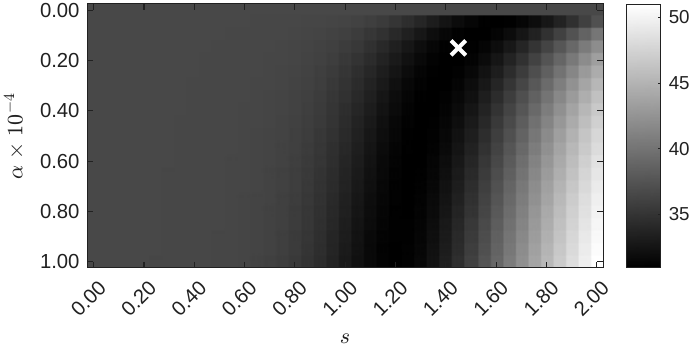}
        \caption{}
    \end{subfigure}
    \hfill
    \begin{subfigure}{0.47\textwidth}
        \centering
        \includegraphics[width=\linewidth]{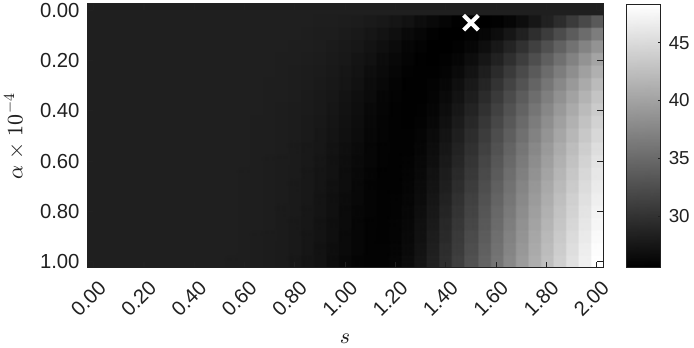}
        \caption{}
    \end{subfigure}

    \vspace{4mm}

    \begin{subfigure}{0.47\textwidth}
        \centering
        \includegraphics[width=\linewidth]{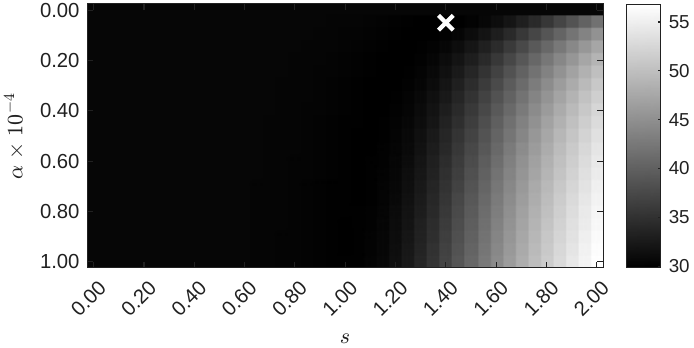}
        \caption{}
    \end{subfigure}
    \hfill
    \begin{subfigure}{0.47\textwidth}
        \centering
        \includegraphics[width=\linewidth]{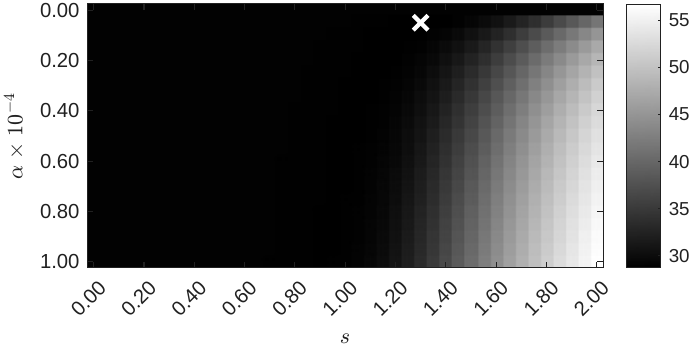}
        \caption{}
    \end{subfigure}

    \vspace{4mm}

    \begin{subfigure}{0.47\textwidth}
        \centering
        \includegraphics[width=\linewidth]{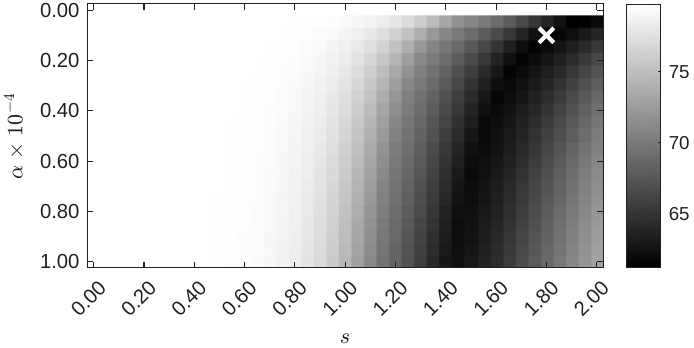}
        \caption{}
    \end{subfigure}
    \hfill
    \begin{subfigure}{0.47\textwidth}
        \centering
        \includegraphics[width=\linewidth]{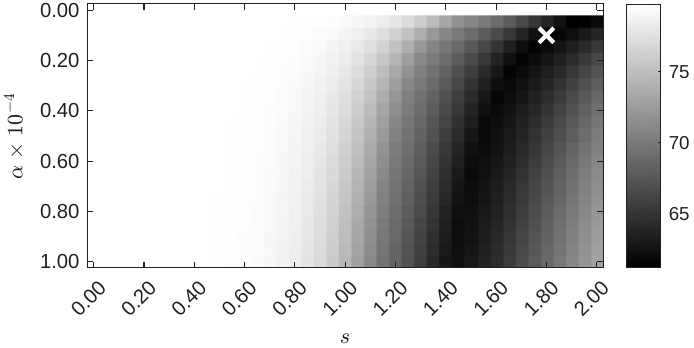}
        \caption{}
    \end{subfigure}

    \caption{f/cFTBP reconstruction errors as functions of the regularization 
    parameters $\alpha$ and $s$ for the Shepp-Logan phantom, without (left 
    column) and with (right column) the positivity constraint. Rows 
    correspond to the $L^1$, $L^2$, and $L^\infty$ error metrics. }
    \label{fig:cFTBP_regularization_flag_rot30}
\end{figure}

\begin{figure}[h]
    \centering
    \begin{subfigure}{0.47\textwidth}
        \centering
        \includegraphics[width=\linewidth]{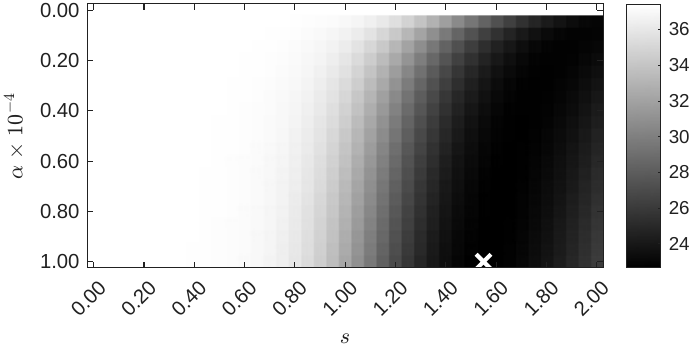}
        \caption{}
    \end{subfigure}
    \hfill
    \begin{subfigure}{0.47\textwidth}
        \centering
        \includegraphics[width=\linewidth]{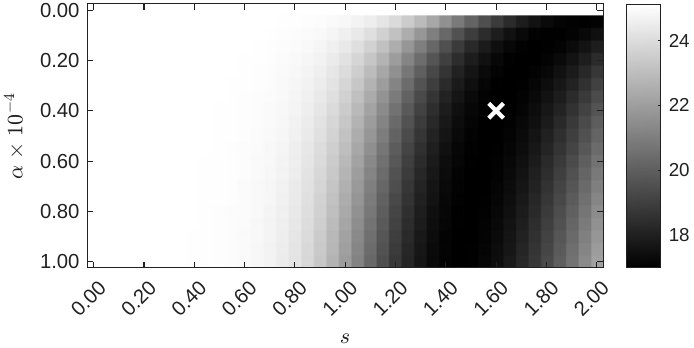}
        \caption{}
    \end{subfigure}

    \vspace{4mm}

    \begin{subfigure}{0.47\textwidth}
        \centering
        \includegraphics[width=\linewidth]{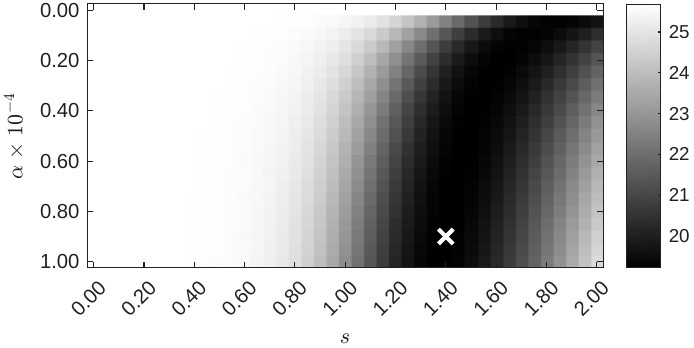}
        \caption{}
    \end{subfigure}
    \hfill
    \begin{subfigure}{0.47\textwidth}
        \centering
        \includegraphics[width=\linewidth]{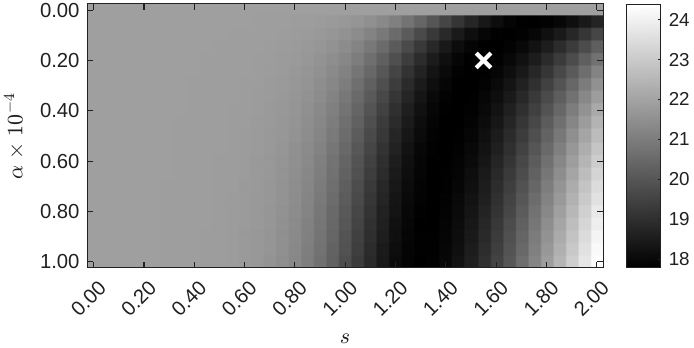}
        \caption{}
    \end{subfigure}

    \vspace{4mm}

    \begin{subfigure}{0.47\textwidth}
        \centering
        \includegraphics[width=\linewidth]{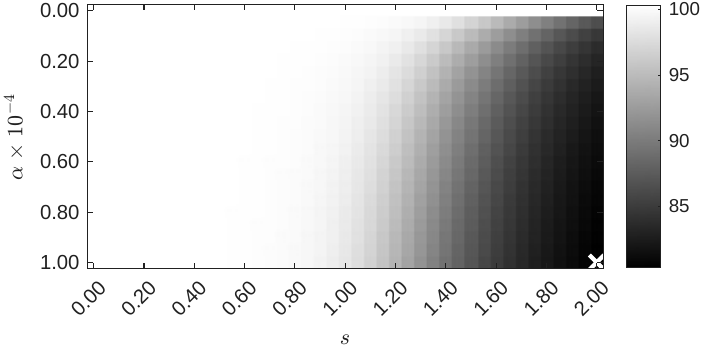}
        \caption{}
    \end{subfigure}
    \hfill
    \begin{subfigure}{0.47\textwidth}
        \centering
        \includegraphics[width=\linewidth]{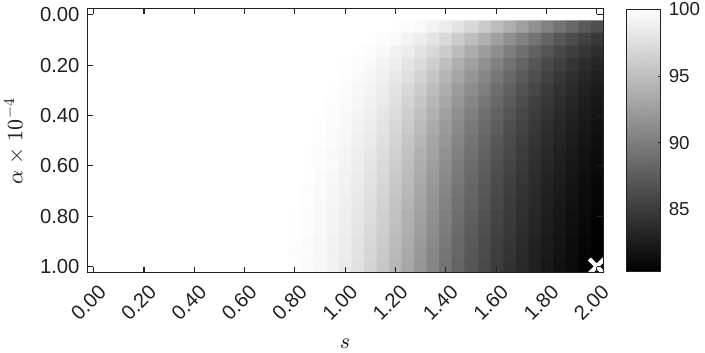}
        \caption{}
    \end{subfigure}

    \caption{f/cFTBP reconstruction errors as functions of the regularization 
    parameters $\alpha$ and $s$ for the flag phantom, without (left 
    column) and with (right column) the positivity constraint. Rows 
    correspond to the $L^1$, $L^2$, and $L^\infty$ error metrics. }
    \label{fig:cFTBP_regularization_flag}
\end{figure}

\begin{figure}[h]
    \centering
    \begin{subfigure}{0.47\textwidth}
        \centering
        \includegraphics[width=\linewidth]{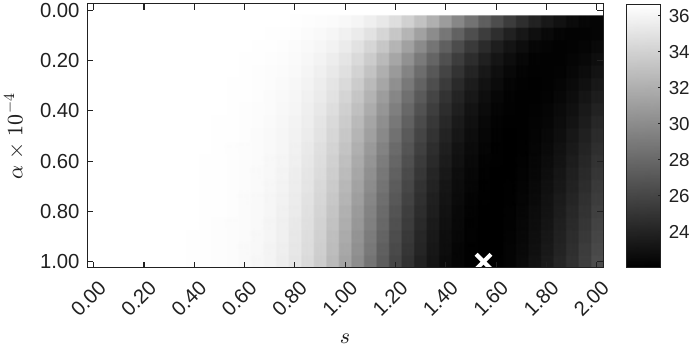}
        \caption{}
    \end{subfigure}
    \hfill
    \begin{subfigure}{0.47\textwidth}
        \centering
        \includegraphics[width=\linewidth]{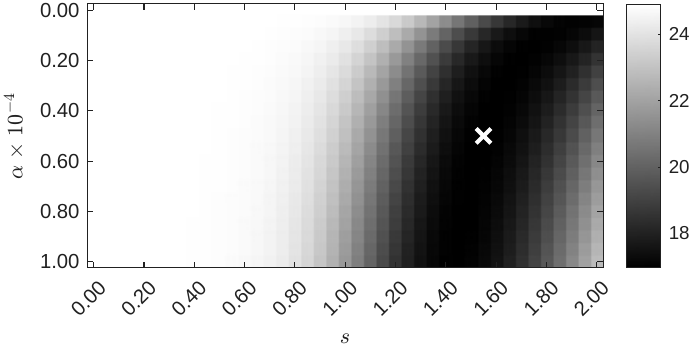}
        \caption{}
    \end{subfigure}

    \vspace{4mm}

    \begin{subfigure}{0.47\textwidth}
        \centering
        \includegraphics[width=\linewidth]{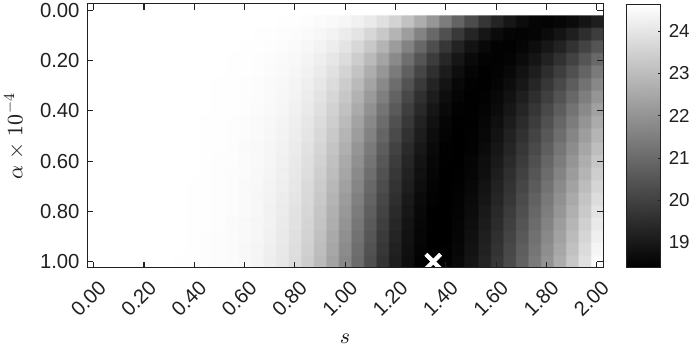}
        \caption{}
    \end{subfigure}
    \hfill
    \begin{subfigure}{0.47\textwidth}
        \centering
        \includegraphics[width=\linewidth]{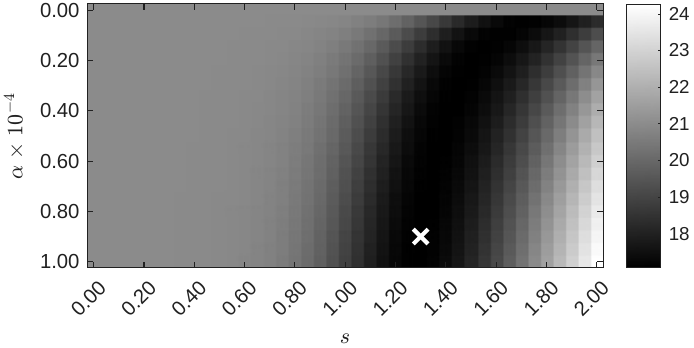}
        \caption{}
    \end{subfigure}

    \vspace{4mm}

    \begin{subfigure}{0.47\textwidth}
        \centering
        \includegraphics[width=\linewidth]{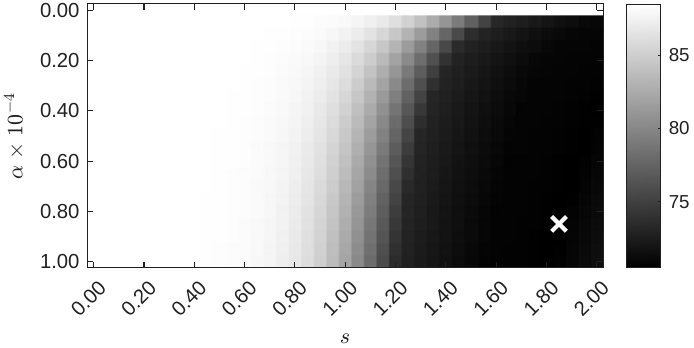}
        \caption{}
    \end{subfigure}
    \hfill
    \begin{subfigure}{0.47\textwidth}
        \centering
        \includegraphics[width=\linewidth]{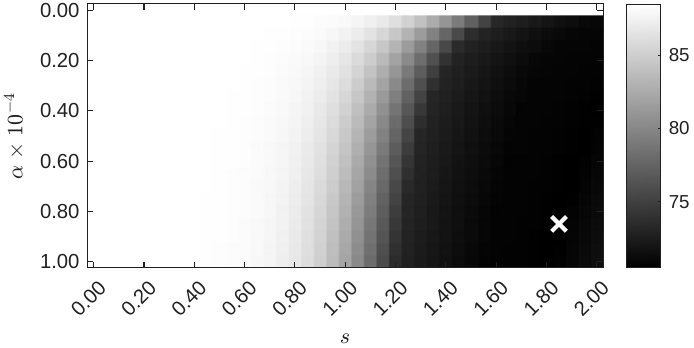}
        \caption{}
    \end{subfigure}

    \caption{f/cFTBP reconstruction errors as functions of the regularization 
    parameters $\alpha$ and $s$ for the rotated flag phantom, without (left 
    column) and with (right column) the positivity constraint. Rows 
    correspond to the $L^1$, $L^2$, and $L^\infty$ error metrics. }
    \label{fig:cFTBP_regularization_flag_rot30}
\end{figure}

\label{app:regularization}

\clearpage
\bibliographystyle{abbrv}
\bibliography{sample}

\end{document}